\documentclass[a4paper, 12pt, twoside]{article}

\title{Girsanov's formula for $G$-Brownian motion}
\author{Emi Osuka\thanks{Mathematical Institute, 
Tohoku University, Aoba-ku, Sendai 980-8578, Japan.}}
\date{\empty}

\setlength{\textwidth}{38em}
\setlength{\textheight}{230mm}
\setlength{\topmargin}{-10mm}
\setlength{\oddsidemargin}{10pt}
\setlength{\evensidemargin}{10pt}

\usepackage{amsmath,mathtools,amsthm,enumerate,comment,color}
\usepackage{amssymb,mathrsfs,bm,latexsym}
\usepackage{fancyhdr}


\pagestyle{myheadings}

\bibliographystyle{plain}
\allowdisplaybreaks
\numberwithin{equation}{section}
\mathtoolsset{showonlyrefs=true}


\theoremstyle{plain}
	\newtheorem{thm}{Theorem}[section]
	\newtheorem{prop}[thm]{Proposition}
	\newtheorem{lem}[thm]{Lemma}
	
\theoremstyle{definition}
	\newtheorem{defi}[thm]{Definition}
\theoremstyle{remark}
	\newtheorem{rem}[thm]{Remark}
	
\newcommand	\tref	{Theorem~\ref}
\newcommand	\pref	{Proposition~\ref}
\newcommand	\lref	{Lemma~\ref}
\newcommand	\cref	{Corollary~\ref}
\newcommand	\dref	{Definition~\ref}
\newcommand	\rref	{Remark~\ref}


\newcommand	\E	{\mathbb{E}}		\newcommand	\F		{\mathbb{F}}

		\newcommand	\N	{\mathbb{N}}
		
		\newcommand	\R	{\mathbb{R}}

\newcommand	\calA	{\mathcal{A}}		\newcommand	\calB	{\mathcal{B}}
		
		\newcommand	\calF		{\mathcal{F}}
		\newcommand	\calH		{\mathcal{H}}
		
		\newcommand	\calL		{\mathcal{L}}
		\newcommand	\calN		{\mathcal{N}}


	\newcommand	\scrL	{\mathscr{L}}
	
	\newcommand	\scrP	{\mathscr{P}}

\newcommand	\qv	[1]	{\langle #1 \rangle}
\newcommand	\fp	[2]	{\frac{\partial #1}{\partial #2}}
\newcommand	\esp	[1]	{\underset{#1}{\mathrm{ess\,sup\,}}}

\newcommand	\bt		{\textbf}

\newcommand	\ddd		{, \dots ,}
\newcommand	\n			{\text}
\newcommand	\one		{\normalfont{\mbox{1}\hspace{-0.25em}\mbox{l}}}

\newcommand	\qd		{\quad}
\newcommand	\qn		{\qd\text}

\newcommand	\tr		{\mathrm{tr}}

\newcommand	\ve		{\varepsilon}
\newcommand	\vp		{\varphi}


\renewcommand	\ge	{\geqslant}
\renewcommand	\le	{\leqslant}

\newcommand	\ol	{\bar}

\newcommand	\bLip	{C_{b,Lip}}
\newcommand	\lLip	{C_{l,Lip}}
\newcommand	\cA	{\calA^{\Theta}_{0,T}}

\newcommand	\tp	{\theta^{\prime}}

\newcommand	\tpB	[2]{B^{#1 ,\tp}_{#2}}
\newcommand	\tB	[2]{B^{#1 , \theta}_{#2}}

\begin{document}
\maketitle
\thispagestyle{empty}
\begin{abstract}
In this paper, we establish Girsanov's formula for $G$-Brownian motion. 
Peng (2007, 2008) constructed $G$-Brownian motion 
on the space of continuous paths under a sublinear expectation called $G$-expectation; 
as obtained by Denis et al.\ (2011), 
$G$-expectation is represented as the supremum of linear expectations 
with respect to martingale measures of a certain class. 
Our argument is based on this representation 
with an enlargement of the associated class of martingale measures, 
and on Girsanov's formula for martingales in the classical stochastic analysis. 
The methodology differs from that of Xu et al.\ (2011), 
and applies to the multidimensional $G$-Brownian motion. 
\footnote{e-mail: sa9m06@math.tohoku.ac.jp}
\footnote{phone: +81~227956401, fax: +81~227956400}
\footnote{\bt{Key words}: 
$G$-Brownian motion, $G$-expectation, sublinear expectation space, 
Girsanov's formula, upper expectation.}
\footnote{\bt{Mathematical Subject Classifications (2010)}: 60H30, 60J65}
\end{abstract}
\section{Introduction}
Motivated by risk measures and volatility uncertainty problems in finance, 
S.~Peng introduced the notion of $G$-Brownian motion. 
Intuitively, $G$-Brownian motion is a Brownian motion whose variance is uncertain. 
While the classical Brownian motion is defined on a probability space, 
$G$-Brownian motion is defined on a sublinear expectation space, that is, 
the triple $(\Omega , \calH , \E)$, where $\Omega$ is a given set 
and $\calH$ is a vector lattice of real-valued functions on $\Omega$ containing $1$, 
which is the domain of a sublinear expectation $\E$. 
$G$-Brownian motion is defined by using two notions 
concerning distributions on a sublinear expectation space: 
identical distributedness and independence. 
On a sublinear expectation space, the notion of distributions 
cannot be interpreted as that on a probability space; 
indeed, as introduced in \cite{Peng;10}, 
it also needs to be interpreted as a sublinear expectation on a class of test functions 
suitably chosen according to the domain $\calH$. 

Peng \cite{Peng;06,Peng;08b} constructed a sublinear expectation space 
on which the canonical process of the space 
$\Omega = C_0 ([0, \infty ) ; \R^d)$ of continuous paths starting from 0 
becomes a $G$-Brownian motion. 
The sublinear expectation in this space is called $G$-expectation. 
It\^o's integrals with respect to $G$-Brownian motion 
and the quadratic variation process of $G$-Brownian motion 
were also defined in \cite{Peng;06,Peng;08b}. 
Recently, L.~Denis, M.~Hu and S.~Peng proved in \cite{Denis;10} 
that $G$-expectation can be represented as the supremum of linear expectations, 
referred to as the upper expectation, with respect to martingale measures of a certain class. 

In this paper, we derive Girsanov's formula for $G$-Brownian motion; 
when we are given a $G$-Brownian motion and a drift 
on the sublinear expectation space of Peng \cite{Peng;06,Peng;08b}, 
we construct a new sublinear expectation space 
on which the $G$-Brownian motion with the drift is a $G$-Brownian motion. 
Through the construction, $G$-expectation is transformed into a weighted $G$-expectation. 
The weight  has the same form as that in the classical Girsanov's formula, 
in which It\^o's integral for $G$-Brownian motion and the quadratic variation process are involved. 
A remarkable point of the construction is that not only $G$-expectation but also its domain is changed. 
As a sublinear expectation space is the notion including the domain of a sublinear expectation, 
in general some care about the choice of domains is needed when changing sublinear expectations. 
In the course of our discussion, it is also required 
that the notion of distributions is appropriately defined in the new sublinear expectation space. 
Those are main reasons why the domain of $G$-expectation 
is changed in order to formulate Girsanov's formula for $G$-Brownian motion. 

In the classical stochastic analysis, Girsanov's formula for Brownian motion plays a fundamental role; 
it is applied in many directions such as the derivation of large deviations of Schilder's \cite{Schilder;66}, 
the construction of weak solutions to stochastic differential equations driven by Brownian motion and so on. 
Among them is the derivation of a variational representation 
for functionals of Brownian motion due to Bou\'e-Dupuis \cite{Boue;98}, 
where they also showed the usefulness of the representation 
by applying it to prove Laplace principles for families of functionals of Brownian motion. 
Using the main result of the present paper, 
we establish in \cite{Osuka;12} a variational representation for functionals of $G$-Brownian motion 
and show that a similar application is possible under the framework of $G$-expectation space. 
Independently of our work \cite{Osuka;12}, 
Gao \cite{Gao;12} also obtains the representation by using our Girsanov's formula, 
and discusses an application to a large deviation for stochastic flows driven by $G$-Brownian motion. 

The keys to the proof of our main result are: 
(i)~the representation of the upper expectation for $G$-expectation 
due to Denis-Hu-Peng \cite{Denis;10}, 
with an enlargement of the associated class of martingale measures
as given in Soner-Touzi-Zhang \cite{Soner;10a}; 
and (ii)~Girsanov's formula for martingales in the classical stochastic analysis. 
Our methodology is different from that of Xu-Shang-Zhang \cite{Xu;10}, 
in which they obtained Girsanov's formula for one-dimensional $G$-Brownian motion; 
their proof relies on the martingale characterization 
of one-dimensional $G$-Brownian motion in \cite{Xu;10a}, 
which restricts their argument to one dimension, 
whereas the method we employ 
in this paper equally works for multidimensional $G$-Brownian motion. 
See \rref{d:rem}. 

This paper is organized as follows. 
From Section \ref{Peng;10} through Section \ref{Denis;10}, 
we introduce necessary notions and related results as preliminaries: 
the notion of distributions on a sublinear expectation space, 
the construction of $G$-expectation, stochastic integrals for $G$-Brownian motion, 
and the upper expectation for $G$-expectation given by Denis-Hu-Peng \cite{Denis;10}. 
In Section \ref{Gir,section}, we state and prove Girsanov's formula for $G$-Brownian motion. 

\subsection{Notation}
\begin{itemize}
	\item $\bLip (\R^n)$ : 
		the space of all bounded and Lipschitz continuous functions on $\R^n$
	\item $\lLip ( \R^n )$ : 
		the space of all functions $\vp$ satisfying 
		\begin{align}
			| \vp (x) - \vp (y) |
			\le 
			C (1 + |x|^k +|y|^k) |x-y|
			\qn{for all } x,y \in \R^n
		\end{align}
		for some $C>0$, $k \in \N$ depending on $\vp$
	\item $\R^{d \times d}$ : all $d \times d$ real matrices
	\item $I_d$ : the $d \times d$ unit matrix
	\item $|x| := \sqrt{ x \cdot x}$ : 
		the norm of $x \in \R^n$, where $\cdot$ is the inner product of $\R^n$
	\item $\| A \| := \sqrt{ \tr [A A^*] }$ : 
		the norm of $A \in \R^{d \times d}$, where $A^*$ is the transposed matrix of $A$
	\item For a probability measure $P$, $E_P$ denotes the expectation with respect to $P$
\end{itemize}
In the sequel, unless otherwise stated, probability spaces 
we deal with are all assumed to be completed. 

\section{Sublinear expectation spaces}\label{Peng;10}

Following Peng \cite[Chapter~I]{Peng;10}, 
we introduce the definition of sublinear expectations and related notions.

Let $\Omega$ be a given set and $\calH$  a vector lattice 
of real functions on $\Omega$ containing $1$, 
that is, $\calH$ is a linear space such that $1 \in \calH$ 
and that $X \in \calH$ implies $|X| \in \calH$. 
\begin{defi}\label{slinear,def}
	A functional $\E : \calH \to \R$ is called a \bt{sublinear expectation} if it satisfies
	\begin{enumerate}[(i)]
		\item $\E [X] \le \E [Y] \qn{if } X \le Y$,
		\item $\E [c] = c \qn{for all } c \in \R$,
		\item $\E [X+Y] \le \E [X] + \E[Y] \qn{for all } X,Y \in \calH$,
		\item $\E [\lambda X] = \lambda \E[X] \qn{for all } \lambda \ge 0$.
	\end{enumerate}
	The triple $(\Omega , \calH , \E)$ is called a \bt{sublinear expectation space}.
\end{defi}

\begin{defi}
	Let $(\Omega ,\calH , \E)$ be a sublinear expectation space.
	$X= (X^1 ,\dots , X^n)$ is called an $n$-dimensional \bt{random vector}, 
	denoted by $ X \in \calH^n$, if $X^i \in \calH$ for each $i = 1\ddd n$.
	$\{ X_t ; t \ge 0 \}$ is called an $n$-dimensional \bt{stochastic process} 
	if for each $t \ge 0$, $X_t$ is an $n$-dimensional random vector.
\end{defi}

Next we introduce the notion of distributions of random variables under a sublinear expectation space. 
Let us consider the following sublinear expectation space: 
\begin{align}
	\n{for all } n \in \N \n{ and } \vp \in \lLip (\R^n) ,~
	X \in \calH^n \n{ implies } \vp (X) \in \calH .
	\label{closure}
\end{align}

\begin{defi}\label{dis}
	Let $X_1$ and $X_2$ be two $n$-dimensional random vectors,
	and $X_3$ an $m$-dimensional random vector defined 
	on a sublinear expectation space $(\Omega , \calH , \E)$.
	$X_1$ and $X_2$ are called \bt{identically distributed} if
	\begin{align}
		\E [ \vp (X_1) ] 
		= \E [\vp (X_2)] 
		\qn{for each } \vp \in \lLip ( \R^n ) .
		\label{id}
	\end{align}
	$X_3$ is said to be \bt{independent} from $X_1$ if
	\begin{align}
		\E[\vp (X_1 , X_3)] 
		= \E[ \E [\vp (x , X_3)] \big|_{x = X_1} ] 
		\qn{for each } \vp \in \lLip (\R^{n+m}) .
		\label{indep}
	\end{align}
\end{defi}

We remark that, as in \eqref{closure}, in order to define the notion of distributions, 
the essential requirement for $\calH$ is that $\calH$ is closed under 
substitutions of its elements into functions $\vp$ of a certain class, 
which may also be chosen, e.g., as $\bLip ( \R^n )$, 
the space of all bounded Lipschitz continuous functions on $\R^n$; 
then in the above definition, $\lLip ( \R^n )$ in \eqref{id} 
and $\lLip ( \R^{n+m} )$ in \eqref{indep} are replaced 
by $\bLip ( \R^n )$ and $\bLip ( \R^{n+m} )$, respectively.

\section{$\bm{G}$-Brownian motion and $\bm{G}$-expectation}\label{Peng;06-08}

Following Peng \cite{Peng;06, Peng;08b}, 
we introduce the construction of $G$-Brownian motion and related notions.

Throughout the paper, we fix $T>0$ and denote by $\Theta$ a fixed non-empty, 
bounded and closed subset of $\R^{d \times d}$.
Let $\Omega := C_0 ([0,T] ; \R^d)$ be the space of all $\R^d$-valued continuous functions 
$(\omega_t)_{t \in [0,T]}$ with $\omega_0 =0$, equipped with the distance
\begin{align}
	\rho (\omega^1 , \omega^2) := \max_{t \in [0,T]} | \omega^1_t - \omega^2_t | .
\end{align}
For each $t \in [0,T]$, we also set 
$\Omega_t := \{ \omega_{\cdot \wedge t} : \omega \in \Omega \}$.
We denote by $\calB (\Omega)$ (resp. $\calB (\Omega_t )$) 
the Borel $\sigma$-algebra on $\Omega$ (resp. $\Omega_t$).

\subsection{$\bm{G}$-Brownian motion and $\bm{G}$-expectation}

For each $\vp \in \bLip (\R^d)$, we denote by 
$u_{\vp} \in C([0,T] \times \R^d)$ the unique viscosity solution 
of the following nonlinear partial differential equation called $G$-heat equation:
\begin{align}\label{Geq}
	\left\{
	\begin{aligned}
		&\fp{u}{t} - G( D^2 u ) = 0 
		\qn{in } (0,T) \times \R^d , \\
		&u |_{t=0} = \vp 
		\qn{in } \R^d ,
	\end{aligned}
	\right.
\end{align}
where $D^2 u$ is the Hessian matrix of $u$ and
\begin{align}
	G (A) 
	:= 
	\sup_{\gamma \in \Theta} 
	\left\{ 
	\frac{1}{2} \tr [ \gamma \gamma^* A ] 
	\right\}
\end{align}
for a $d \times d$ symmetric real matrix $A$; 
for the existence and uniqueness of a viscosity solution of \eqref{Geq}, 
refer to Appendix C, Section 3 in \cite{Peng;10}.

\begin{rem}\label{hitaika}
	If there exists a constant $\sigma_0 >0$ such that 
	$\gamma \gamma^* \ge \sigma_0 I_d$ for all $\gamma \in \Theta$,
	then \eqref{Geq} has a unique $C^{1,2}$-solution.
\end{rem}

Let $B$ be the canonical process of $\Omega$.
For each $t \in [0,T]$, we denote by $\bLip (\Omega_t)$ 
the set of all bounded Lipschitz cylinder functionals on $\Omega_t$:
\begin{align}
	\bLip (\Omega_t) 
	:= 
	\{ \vp (B_{t_1} \ddd B_{t_n}) 
	: n \in \N ,~ t_1 \ddd t_n \in [0,t] ,~ \vp \in \bLip (( \R^d )^n) \} ,
\end{align}
and we write $\bLip (\Omega) \equiv \bLip (\Omega_T)$ simply.
We can construct a consistent sublinear expectation $\E$ on $\bLip (\Omega)$ such that
\begin{itemize}
	\item for all $0 \le s<t \le T$ and $\vp \in \bLip (\R^d)$,
		\begin{align}
			\E [\vp (B_t - B_s)] 
			= \E [\vp ( B_{t-s} )] 
			= u_{\vp} (t-s,0),
		\end{align}
	\item for all $n \in \N ,~0 \le t_1 < \dots < t_n \le T$ and $\vp \in \bLip ((\R^d)^n) $,
		\begin{align}
			\E [\vp (B_{t_1} \ddd B_{t_n})] 
			= \E [ \vp_1 (B_{t_1} \ddd B_{t_{n-1}})] ,
		\end{align}
		where $\vp_1 (x_1 \ddd x_{n-1}) 
		:= \E [\vp (x_1 \ddd x_{n-1} , B^{t_{n-1}}_{t_n} + x_{n-1}) ]$
		with $B^s_t := B_t -B_s$ for $0 \le s \le t \le T$.
\end{itemize}
For $t_{k-1} \le t < t_k$, the related conditional expectation of 
$\vp ( B_{t_1} \ddd B_{t_n} )$ on $\bLip (\Omega_t)$ is defined by
\begin{align}
	\E_t [ \vp ( B_{t_1} \ddd B_{t_n}) ] 
	:= \vp_{n-k} (B_{t_1} \ddd B_{t_{k-1}} , B_t),
\end{align}
where $\vp_{n-k} (x_1 \ddd x_{k-1} , x_k) 
= \E [\vp (x_1 \ddd x_{k-1 }, B^t_{t_k} +x_k \ddd B^t_{t_n} + x_k )]$.

Let $\calL^1_G (\Omega_t)$ be the completion of 
$\bLip (\Omega_t)$ under the norm $\E [| \cdot |]$,
and we write $\calL^1_G (\Omega) \equiv \calL^1_G (\Omega_T)$ simply.
We can extend $\E [ \cdot ]$ (resp. $\E_t [ \cdot ]$) 
to a unique sublinear expectation (resp. a conditional sublinear expectation) 
on $\calL^1_G (\Omega)$.
It is called $G$-expectation (resp. conditional $G$-expectation).

\begin{defi}\label{GBm,def}
	A stochastic process $B$ on $(\Omega , \calL^1_G (\Omega) , \E)$ 
	is called a $\bm{G}$\bt{-Brownian motion} if
	\begin{enumerate}[(i)]
		\item $B_0 =0$,
		\item for all $0 \le s<t \le T$ and $\vp \in \bLip (\R^d)$,
			\begin{align}
				\E [\vp (B_t - B_s)] 
				= \E [\vp ( B_{t-s} )] 
				= u_{\vp} (t-s,0),
			\end{align}
		\item for all $n \in \N ,~0 \le t_1 < \dots < t_n \le T$ and $\vp \in \bLip ((\R^d)^n) $,
			\begin{align}
				\E [\vp (B_{t_1} \ddd B_{t_n})] 
				= \E [ \vp_1 (B_{t_1} \ddd B_{t_{n-1}})] ,
			\end{align}
			where $\vp_1 (x_1 \ddd x_{n-1}) 
			:= \E [\vp (x_1 \ddd x_{n-1} , B_{t_n} - B_{t_{n-1}} + x_{n-1}) ]$.
	\end{enumerate}
\end{defi}

Note that (ii) means $B_t - B_s$ and $B_{t-s}$ are identically distributed,
and that (iii) means $B_{t_n} - B_{t_{n-1}}$ is independent from $(B_{t_1} \ddd B_{t_{n-1}})$.
From the above definition, we can see that 
on the sublinear expectation space $(\Omega , \calL^1_G (\Omega) , \E)$,
the canonical process is a $G$-Brownian motion.

\subsection{It\^o's integral for $\bm{G}$-Brownian motion}

For each $p \ge 1$, we denote by $\calL^p_G ( \Omega_t )$ 
the completion of $\bLip ( \Omega_t )$ under $\E[| \cdot |^p]^{1/p}$.
Let
\begin{align}
	M^{p,0}_G ( \Omega )
	:=
	\left\{
	\sum_{k=0}^{n-1} \xi_k \one_{[t_k , t_{k+1})} :
	n \in \N ,~ 0= t_0 < t_1 < \dots < t_{n} =T ,~ 
	\xi_k \in \calL^p_G ( \Omega_{t_k} )
	\right\},
\end{align}
and let $M^p_G ( \Omega )$ be the completion of $M^{p,0}_G ( \Omega )$ 
under $( \int_0^T \E [| \cdot |^p ] dt )^{1/p}$.

For every $h \in (M^2_G ( \Omega ))^d$, we denote
\begin{align}
	\int_0^t h_s \cdot d B_s
	= \sum_{i=1}^d \int_0^t h^i_s \, d B^i_s .
\end{align}
Here each summand denotes It\^o's integral 
with respect to the $i$-th coordinate $B^i$ of $G$-Brownian motion $B$,
which is defined as an element of $\calL^2_G ( \Omega_t )$.
For every $i,j = 1 \ddd d$, the mutual variation of $B^i$ and $B^j$
\begin{align}
	\qv{B^i , B^j}_t
	:= 
	B^i_t B^j_t 
	- \int_0^t B^i_s \, dB^j_s 
	- \int_0^t B^j_s \, dB^i_s
\end{align}
is also defined since $B^i , B^j \in M^2_G ( \Omega )$.
We denote by $\qv{B}_t := ( \qv{B^i , B^j}_t )_{1 \le i,j \le d}$, 
$0 \le t \le T$, the quadratic variation of $B$.
For each $\eta \in ( M^1_G (\Omega) )^d$, we can define
\begin{align}
	\int_0^t ( d\qv{B}_s \, \eta_s )
	:= 
	\left(
	\sum_{j=1}^d \int_0^t \eta^j_s \, d\qv{B^i , B^j}_s
	\right)_{1 \le i \le d}
\end{align}
as an element of $\left( \calL^1_G (\Omega_t) \right)^d$.
Noting that $\eta^1 \eta^2 \in M^1_G (\Omega)$ for any 
$\eta^1 , \eta^2 \in M^2_G (\Omega)$,
we also set, for each $h \in ( M^2_G (\Omega) )^d$,
\begin{align}
	\int_0^t h_s \cdot (d \qv{B}_s \, h_s)
	:= \sum_{i,j=1}^d \int_0^t h^i_s \, h^j_s \, d \qv{B^i , B^j}_s .
\end{align}

\subsection{$\bm{G}$-martingales}

Now we introduce the notion of $G$-martingales.

\begin{defi}
	A process $X = \{ X_t ; 0 \le t \le T \}$ is called 
	a \bt{$\bm{G}$-martingale} if for each $0 \le s \le t \le T$,
	we have $X_t \in \calL^1_G (\Omega_t)$ and
	\begin{align}
		\E_s [X_t] 
		= X_s \qn{in } \calL^1_G (\Omega_s) .
	\end{align}
	We call $X$ a \bt{symmetric $\bm{G}$-martingale} 
	if both $X$ and $-X$ are $G$-martingales.
\end{defi}

For $h \in (M^2_G (\Omega))^d$, for example, 
It\^o's integral process $\int_0^{\cdot} h_s \cdot dB_s$ is a symmetric $G$-martingale.
$\int_0^{\cdot} h_s \cdot (d \qv{B}_s \, h_s) - \int_0^{\cdot} 2 G(h_sh^*_s) \, ds$ 
is a $G$-martingale, but in general,
not a symmetric $G$-martingale (see \cite{Peng;08b} Example 50).

\section{An upper expectation for $\bm{G}$-expectation}\label{Denis;10}

We introduce a representation of $G$-expectation 
as an upper expectation proved in Denis-Hu-Peng \cite{Denis;10}.

Let $W$ be a standard $d$-dimensional Brownian motion 
under a probability measure $P$ on $\Omega$, 
and let $\F^W$ be the filtration generated by $W$:
\begin{align}
	\calF^W_t 
	:= \sigma (W_u , 0 \le u \le t) \vee \calN ,
	\qd 
	\F^W 
	:= 
	\{ \calF^W_t ; t \ge 0 \} ,
\end{align}
where $\calN$ is the collection of all $P$-null subsets.
For a given bounded and closed set $\Theta \subset \R^{d \times d}$, let
\begin{align}
	\cA 
	:= 
	\{ 
	\n{all } \Theta \n{-valued } 
	\F^W \n{-progressively measurable processes on the interval } [0,T] 
	\} .
\end{align}
We identify two elements $\theta$, $\tp \in \cA$ if they are equivalent:
\begin{align}
	\theta_t ( \omega ) 
	= \tp_t ( \omega ) 
	\qd d t \times P \n{-a.e.\ } (t, \omega ) \in [0,T] \times \Omega .
\end{align}
The quotient set of $\cA$ by this equivalence relation 
is still denoted by the same symbol $\cA$.
For each $\theta \in \cA$, let $P_{\theta}$ 
be the law of the process $\{ \int_0^t \theta_s \, dW_s ; 0 \le t \le T \}$.
Now we define the \bt{capacity} $c : \calB (\Omega) \to [0,1]$ by
\begin{align}
	c (A) 
	:= 
	\sup_{\theta \in \cA} 
	P_{\theta} (A) 
	\qn{for } A \in \calB (\Omega) .
\end{align}
We introduce capacity-related terminology.
\begin{itemize}
	\item A property holds \bt{quasi-surely} (q.s.) 
		if it holds outside a set $A$ with $c(A)=0$.
	\item A mapping $X : \Omega \to \R$ 
		is said to be \bt{quasi-continuous} (q.c.) if for all $\ve >0$,
		there exists an open set $O$ with $c(O) < \ve$ 
		such that $X|_{O^c}$ is continuous.
	\item We say that $X : \Omega \to \R$ has a \bt{q.c.\ version} 
		if there exists a q.c.\ function $Y : \Omega \to \R$ with $X=Y$ q.s.
\end{itemize}
For $t \in [0,T]$, we denote by $L^0 (\Omega_t)$ 
the space of all $\calB (\Omega_t)$-measurable real-valued functions.
For $t =T$, we simply write $L^0(\Omega)$.
For each $X \in L^0 (\Omega)$ such that $E_{P_{\theta}} [X]$ 
exists for all $\theta \in \cA$, we set
\begin{align}
	\ol{\E} [X] 
	:= \sup_{\theta \in \cA} E_{P_{\theta}} [X] .
\end{align}

The following theorem plays a key role in the formulation 
and proof of Girsanov's formula.

\begin{thm}[\cite{Denis;10} Theorem~54]\label{chara}
	It holds that
	\begin{align}
		&\calL^1_G (\Omega_t) 
		= 
		\{ X \in L^0 (\Omega_t ) : \n{ X has a q.c.\ version, } 
		\lim_{n \to \infty} \ol{\E} [|X| \one_{ \{ |X|>n  \} }] =0 \} , \\
		&\E [X] 
		= \ol{\E} [X] 
		\qn{for all } X \in \calL^1_G (\Omega) .
	\end{align}
\end{thm}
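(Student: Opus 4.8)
The plan is to establish the two assertions separately---the identification of the space $\calL^1_G(\Omega_t)$ and the agreement $\E=\ol{\E}$ on $\calL^1_G(\Omega)$---both of which I would reduce to the single key fact that $\E$ and $\oEt$ coincide on the generating class. Since $\calL^1_G(\Omega_t)$ is by definition the completion of $\bLip(\Omega_t)$ under $\E[|\cdot|]$, once one knows
\begin{align*}
\E[\xi] = \oEt[\xi] \qn{for all } \xi \in \bLip(\Omega),
\end{align*}
the two norms $\E[|\cdot|]$ and $\oEt[|\cdot|]$ agree on a dense set, and the space identification reduces to a general statement about completions relative to the capacity $c$.

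To prove coincidence on $\bLip$, I would exploit the stochastic-control representation of the $G$-heat equation. Because $G(A)=\sup_{\gamma\in\Theta}\frac12\tr[\gamma\gamma^*A]$, equation \eqref{Geq} is the Hamilton--Jacobi--Bellman equation of a control problem steering the volatility $\gamma_s\in\Theta$ of the diffusion $\int_0^t\gamma_s\,dW_s$. The value function $v(t,x)=\sup_{\theta\in\cA}E_P[\vp(x+\int_0^t\theta_s\,dW_s)]$ is a viscosity solution of \eqref{Geq} with datum $\vp$, so by the uniqueness in \rref{hitaika} it equals $u_\vp$; evaluating at $x=0$ gives $\E[\vp(B_t)]=u_\vp(t,0)=\sup_\theta E_{P_\theta}[\vp(B_t)]=\oEt[\vp(B_t)]$ for a single marginal. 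For a general cylinder functional $\vp(B_{t_1},\dots,B_{t_n})$ I would argue by induction on $n$, matching the nested recursion defining $\E$ on $\bLip$ (the conditional construction through $\vp_1,\dots$) with the dynamic-programming principle for $\oEt$; concatenating near-optimal controls across successive intervals requires a measurable selection so that the glued process again belongs to $\cA$.

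With $\E=\oEt$ on $\bLip$ in hand, the space identification follows from regularity of $c$, whose essential input is tightness of $\{P_\theta:\theta\in\cA\}$. Since $\Theta$ is bounded, the Burkholder--Davis--Gundy inequality yields moment bounds $E_{P_\theta}[|B_t-B_s|^{2m}]\lqs C_m|t-s|^m$ uniform in $\theta$, so a Kolmogorov criterion gives uniform tightness, hence weak relative compactness of the family. This permits the general characterization: the completion of $\bLip(\Omega_t)$ under $\oEt[|\cdot|]$ consists precisely of those $X\in L^0(\Omega_t)$ admitting a q.c.\ version with $\lim_{n\to\infty}\oEt[|X|\one_{\{|X|>n\}}]=0$. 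One inclusion extracts a q.c.\ limit from a Cauchy sequence in $\bLip$ (using that $\oEt[|\cdot|]$-convergence forces convergence in capacity along a subsequence) and checks the tail condition; the converse truncates $X$ at level $n$ and approximates the truncation, uniformly on a set of large capacity, by a Lipschitz cylinder function, which is possible exactly because tightness confines the relevant mass to compacts where a Tietze-type extension applies. Evaluating at $t=T$ and passing to the limit along an approximating $\bLip$ sequence then gives $\E[X]=\oEt[X]$ on all of $\calL^1_G(\Omega)$.

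I expect the main obstacle to be this completion characterization: showing that every quasi-continuous, tail-integrable $X$ is genuinely approximable in the $\oEt[|\cdot|]$-norm by bounded Lipschitz cylinder functionals. This is where tightness must be used in full force, and where the nonlinearity of $\oEt$---a supremum over an entire family of measures rather than a single one---makes the passage from pointwise continuity to norm approximation delicate, since the approximation error has to be controlled simultaneously under every $P_\theta$.
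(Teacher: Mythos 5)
This theorem is one the paper does not prove at all: it is quoted verbatim from Denis--Hu--Peng \cite{Denis;10} (their Theorem~52), so there is no internal proof to measure your attempt against; the right comparison is with the proof in that reference. Your outline reproduces its architecture faithfully: the identification $\E = \oEt$ on $\bLip (\Omega)$ via the value-function (HJB) representation of the $G$-heat equation, extended to cylinder functionals by induction through the dynamic programming principle (these correspond to Lemma~44 and Theorem~45 of \cite{Denis;10}, the same two results the paper invokes when proving its \lref{cG}); tightness of $\{ P_{\theta} : \theta \in \cA \}$ from moment bounds on increments uniform in $\theta$; and the abstract characterization of the completion under the upper expectation of a weakly compact family of measures. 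None of your steps would fail, and the reduction you make at the start --- once $\E$ and $\oEt$ agree on the generating class, the two norms coincide and both assertions follow from the capacity-theoretic completion theorem --- is exactly how the original argument is organized. So this is the same route as the cited source, not a new one.

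Two caveats, which concern completeness rather than correctness. First, the two items you flag as the ``main obstacles'' --- the measurable-selection argument needed to concatenate near-optimal controls in the dynamic programming step, and the approximation of quasi-continuous, tail-integrable functions by bounded Lipschitz cylinder functionals on sets of large capacity --- are not side issues: they constitute the technical bulk of \cite{Denis;10}, so as written your text is a correct roadmap rather than a proof. Second, a wrinkle you pass over: tightness gives only \emph{relative} weak compactness of $\{ P_{\theta} : \theta \in \cA \}$, whereas the abstract completion theorem is stated for a weakly compact family; one must pass to the weak closure and verify that the upper expectation over the closure agrees with $\oEt$ on the functions in question (immediate for bounded continuous functions by weak convergence, and extended to the completion by approximation). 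The cited paper handles this explicitly, and any self-contained write-up of your plan would need to as well.
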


\section{Main result}\label{Gir,section}

In this section, we  firstly characterize symmetric $G$-martingales.
We then state and prove the main result of this paper, Girsanov's formula for $G$-Brownian motion.
We also explore a condition that plays a similar role to Novikov's condition in the classical stochastic analysis.

\subsection{A characterization of symmetric $\bm{G}$-martingales}

We start with a lemma that characterizes conditional $G$-expectations.
For each $\theta \in \cA$ and $t \in [0,T]$, set
\begin{align}
	\calA (t,\theta ) 
	:= \{ \tp \in \cA : \tp = \theta ~\n{on } [0,t] \} ,
\end{align}
where the identity between $\tp$ and $\theta$ is to be understood as
\begin{align}
	\tp_s ( \omega ) 
	= \theta_s ( \omega ) 
	\qd d s \times P \n{-a.e.\ } (s,\omega ) \in [0,t] \times \Omega .
\end{align}

\begin{lem}\label{cG}
	For each $\theta \in \cA$, $X \in \calL^1_G (\Omega)$ 
	and $t \in [0, T]$, it holds that
	\begin{align}\label{esp}
		\E_t [X]
		= 
		\esp{ \tp \in \calA (t, \theta) }
		E_{P_{\tp}} [ X | \calF_t ]
		\qd P_{\theta}\n{-a.s.},
	\end{align}
	where $\{ \calF_t ; 0 \le t \le T \}$ is the natural filtration of $B$.
\end{lem}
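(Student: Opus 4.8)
The statement asserts that the conditional $G$-expectation $\E_t[X]$ coincides with an essential supremum of classical conditional expectations, taken over all $\tp$ that agree with a fixed $\theta$ on $[0,t]$. Let me think carefully about how to prove this.

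We have $\E_t[X]$ defined intrinsically via the Peng construction on $\bLip(\Omega_t)$, extended to $\calL^1_G$. On the other side, we have the Denis-Hu-Peng representation (Theorem 52 = \tref{chara}) giving $\E[X] = \oEt[X] = \sup_{\theta} E_{P_\theta}[X]$. The natural strategy is to prove the conditional analogue of this representation.

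Let me plan the steps:

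1. **Reduce to $\bLip(\Omega)$ (or cylinder functionals) first, then approximate.** For $X \in \bLip(\Omega)$, the conditional $G$-expectation $\E_t[X]$ is given explicitly by the Peng formula as a function of path values up to time $t$. I would first establish the identity for such nice $X$, then extend to $\calL^1_G(\Omega)$ by density and continuity of both sides under the $\E[|\cdot|]$-norm.

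2. **Prove the two inequalities separately.**
   - *(Upper bound)* For each fixed $\tp \in \calA(t,\theta)$, I expect $E_{P_{\tp}}[X \mid \nFB_t] \leq \E_t[X]$ holds $P_\theta$-a.s. (since $\tp=\theta$ on $[0,t]$, the laws agree on $\nFB_t$). This should follow from the defining sublinearity and the unconditional representation applied "after time $t$." Taking ess sup over $\tp$ gives one direction.
   - *(Lower bound / attainment)* The harder direction: I need to find, for each $\varepsilon$, a $\tp \in \calA(t,\theta)$ whose conditional expectation approximates $\E_t[X]$ from below. This requires a **measurable selection / pasting argument**: glue $\theta$ on $[0,t]$ with near-optimal controls on $(t,T]$ chosen $\omega$-adaptively.

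**Key mechanism for the lower bound.**

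The engine is the unconditional representation combined with a dynamic-programming / stability argument. The point is that $\E_t[X](\omega)$ equals $\E[X^{t,\omega}]$ where $X^{t,\omega}$ is the "shifted" functional, and by \tref{chara} this unconditional $G$-expectation is a supremum over controls on $[t,T]$. I would invoke that the family $\{E_{P_{\tp}}[X \mid \nFB_t] : \tp \in \calA(t,\theta)\}$ is **upward directed** (given two controls, one can paste them on the $\nFB_t$-set where one dominates, producing a third control in $\calA(t,\theta)$ that dominates both — this uses that $\Theta$-valued progressively measurable processes are closed under such measurable gluing). Upward directedness lets me realize the ess sup as a monotone limit $E_{P_\theta}[\lim_n E_{P_{\tp_n}}[X\mid\nFB_t]]$, and then I match the limit to $\E_t[X]$ via the unconditional identity.

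**The main obstacle.**

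The hardest part will be the measurable-selection step establishing the lower bound with $\theta$ held fixed on $[0,t]$. Two technical issues arise: first, ensuring the glued control $\tp$ is genuinely $\F^W$-progressively measurable (not merely adapted to $\nFB$), which requires care because the representation is stated on the Wiener space with filtration $\F^W$, whereas the conditioning is on the canonical filtration $\nFB$ of $B$; one must track how $\nFB_t$ pulls back under the map $\omega \mapsto \int_0^\cdot \theta_s\,dW_s$. Second, one must verify the interchange of ess sup with the unconditional representation survives the $\calL^1_G$-completion — i.e., that both sides of \eqref{esp} depend $\calL^1_G$-continuously on $X$, so the identity proved for $\bLip(\Omega)$ passes to the limit. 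I would handle the $\F^W$-versus-$\nFB$ discrepancy by working with representatives and using the equivalence-class identification of $\cA$ already built into the setup, and handle the continuity by the uniform estimate $|\E_t[X]-\E_t[Y]| \leq \E_t[|X-Y|]$ together with the corresponding contraction for the classical conditional expectations under each $P_{\tp}$.
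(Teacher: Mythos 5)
Your outline is correct, and at its core it is the same dynamic-programming argument as the paper's; the difference lies in how the labor is divided. The paper never splits \eqref{esp} into two inequalities: it imports the hard Wiener-space step wholesale from Denis--Hu--Peng (Lemma 44 of \cite{Denis;10}, the induction in the proof of their Theorem 45, and \tref{chara}) to obtain the frozen-parameter identity \eqref{dpp}, so that the only genuinely new work is transferring that statement from $(\FW_t , P)$ to $(\nFB_t , P_{\theta})$: for each $U \in \nFB_t$ one pulls back to $V = \{ \tB{0}{\cdot} \in U \} \in \FW_t$ and applies Yan's commutation theorem twice, using that $P_{\theta} = P_{\tp}$ on $\nFB_t$ for every $\tp \in \calA (t,\theta)$. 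What you propose to prove from scratch --- the lower bound via $\omega$-adaptive pasting of near-optimal controls behind a frozen $\FW_t$-measurable parameter --- is exactly the content of the cited DHP results, and your upward-directedness observation is exactly the hypothesis that licenses Yan's theorem, so nothing in your plan would fail. What your route buys is self-containedness (no appeal to the conditional DHP machinery); what it costs is that the measurable-selection step you rightly single out as the main obstacle is the heaviest part of the whole argument and is left at the level of a plan, and likewise your handling of the $\F^W$-versus-$\nFB$ bookkeeping (``working with representatives'') stays vague where the paper's pull-back computation is completely explicit. The final extension to $\calL^1_G (\Omega)$ is the same in both: the paper bounds both error terms by $\E [|X - X_n|]$, the second one again via Yan's theorem and the agreement of $P_{\theta}$ and $P_{\tp}$ on $\nFB_t$.
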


As noted in the proof of Proposition 3.4 of Soner-Touzi-Zhang \cite{Soner;10a}, 
the validity of \eqref{esp} for $X \in \bLip ( \Omega )$ follows from \cite{Denis;10}; 
the assertion for $X \in \calL^1_G (\Omega)$ is then seen to hold 
by approximations as done in the proof of their proposition. 
Since there seems to be an inadequacy in its approximating argument 
and the family $\{ P_{\theta} : \theta \in \cA \}$ of probability measures
is strictly smaller than the one in their proposition, 
we give a proof of this lemma for the sake of self-containedness of the paper.

\begin{proof}[Proof of \lref{cG}]
	By Lemma 44 of \cite{Denis;10} and by the upper expectation
	representation for $G$-expectation (\tref{chara}), 
	we see that
	\begin{align}
		\left.
		\E [\vp (x, B^t_T)] 
		\right| 
		_{x = \zeta} 
		= 
		\esp{\theta \in \cA} 
		E_P [ \vp (\zeta , \tB{t}{T}) | \calF^W_t ] 
		\qd P \n{-a.s.}
	\end{align}
	for all $t \in [0,T]$, $m \in \N$, $\vp \in \bLip (\R^{m+d})$ 
	and $\zeta \in L^2 (\Omega , \calF^W_t , P ; \R^m)$.
	Here and below we write
	\begin{align}
		\tB{s}{t}
		= 
		\int _s^t \theta_u \, d W_u
		\qn{for } 0 \le s \le t \le T.
	\end{align}
	Then, repeating the same argument as in the proof of Theorem 45 of \cite{Denis;10}, 
	we see inductively that
	\begin{align}\label{dpp}
		\hspace*{-3mm}
		\left.
		\E [ \vp ( x , B^t_{s_1} , B^{s_1}_{s_2} \ddd B^{s_{k-1}}_{s_k} ) ] 
		\right|
		_{x = \zeta}
		= 
		\esp{\theta \in \cA} 
		E_P
		[ 
		\vp ( \zeta , \tB{t}{s_1} , \tB{s_1}{s_2} \ddd \tB{s_{k-1}}{s_k} ) 
		| \calF^W_t 
		]
	\end{align}
	$P$-a.s.\ for all 
	$t \in [0,T]$, $k, m \in \N$, $t \le s_1 < \dots < s_k \le T$, 
	$\vp \in \bLip (\R^m \times (\R^d)^k)$
	and $\zeta \in L^2 (\Omega , \calF^W_t , P ; \R^m)$.
	Now we fix $\theta \in \cA$ and $t \in [0,T)$ arbitrarily.
	We take $X= \vp (B_{t_1} \ddd B_{t_n}) \in \bLip (\Omega)$ 
	with a partition $0 = t_0 \le t_1 < \dots < t_n =T$,
	and let $i=0,1,\dots ,n-1$ be such that $t \in [t_i , t_{i+1})$.
	If we set
	\begin{align}
		\vp_1 (x_1 \ddd x_i , x)
		:= 
		\E [ \vp ( x_1 \ddd x_i , B^t_{t_{i+1}} + x \ddd B^t_{t_n} + x ) ]
	\end{align}
	for $(x_1 \ddd x_i , x) \in (\R^d)^{i+1}$, 
	then we have by \eqref{dpp}
	\begin{align}
		\vp_1 ( \tB{0}{t_1} \ddd \tB{0}{t_i} , \tB{0}{t} )
		= 
		\esp{\tp \in \calA (t,\theta)} 
		E_P [ \vp ( \tpB{0}{t_1} \ddd \tpB{0}{t_n} ) | \calF^W_t ]
		\qd P \n{-a.s.}
	\end{align}
	Let $U \in \calF_t$ be arbitrary and 
	set $V= \{ \tB{0}{\cdot} \in U \} \in \calF^W_t$. 
	Then
	\begin{align}
		E_{P_{\theta}} 
		[ \one_U \vp_1 (B_{t_1} \ddd B_{t_i} , B_t) ]
		&= 
		E_P 
		[ \one_V \vp_1 ( \tB{0}{t_1} \ddd \tB{0}{t_i} , \tB{0}{t} ) ]\\
		&= 
		E_P 
		[ \one_V 
		\esp{\tp \in \calA (t,\theta)} 
		E_P [ \vp ( \tpB{0}{t_1} \ddd \tpB{0}{t_n} ) | \calF^W_t ]
		] \\
		&= 
		\sup_{\tp \in \calA (t,\theta)} 
		E_P [ \one_V \vp ( \tpB{0}{t_1} \ddd \tpB{0}{t_n} ) ]\\
		&= 
		\sup_{\tp \in \calA (t,\theta)} 
		E_{P_{\tp}} 
		[ \one_U \vp ( B_{t_1} \ddd B_{t_n} ) ] ,
	\end{align}
	where we used Yan's commutation theorem 
	(see, e.g., \cite{Peng;04} Theorem~a3) for the third line.
	Using Yan's commutation theorem again, 
	and noting $P_{\theta} = P_{\tp}$ on $\calF_t$ for $\tp \in \calA (t, \theta)$,
	we see that this is further rewritten as
	\begin{align}
		E_{P_{\theta}} 
		[ \one_U 
		\esp{\tp \in \calA (t,\theta)} 
		E_{P_{\tp}} 
		[\vp (B_{t_1} \ddd B_{t_n}) | \calF_t ] 
		] .
	\end{align}
	As $\vp_1 (B_{t_1} \ddd B_{t_i} , B_t) = \E_t [\vp (B_{t_1} \ddd B_{t_n})]$ 
	by definition, it follows that
	\begin{align}
		\E_t [\vp (B_{t_1} \ddd B_{t_n})]
		= 
		\esp{\tp \in \calA (t,\theta)} 
		E_{P_{\tp}} 
		[\vp (B_{t_1} \ddd B_{t_n}) | \calF_t] 
		\qd P_{\theta} \n{-a.s.}
	\end{align}
	Therefore \eqref{esp} is proved for $X \in \bLip (\Omega)$.

	Now for $X \in \calL^1_G (\Omega)$, 
	we take a sequence $\{ X_n \}_{n=1}^{\infty} \subset \bLip (\Omega)$ such that
	\begin{align}
		\E [|X-X_n|] \to 0 
		\qn{as } n \to \infty .
	\end{align}
	For each $\theta \in \cA$,
	\begin{align}
		&E_{P_{\theta}} 
		[ | 
		\E_t [X] 
		- \esp{\tp \in \calA (t,\theta)} 
		E_{P_{\tp}} [X | \calF_t ]
		| ] \\
		&\le E_{P_{\theta}} 
		[|\E_t [X] - \E_t [X_n]|]
		+ 
		E_{P_{\theta}} 
		[|
		\esp{\tp \in \calA (t,\theta)} 
		E_{P_{\tp}} [X | \calF_t ] 
		- 
		\esp{\tp \in \calA (t, \theta)} 
		E_{P_{\tp}}[X_n | \calF_t ]
		|]\\
		&=: 
		I_n + I\!I _n.
	\end{align}
	It is easily seen that $I_n \le \E [|X-X_n|]$.
	Also for $I\!I_n$, we have
	\begin{align}
		I\!I _n
		&\le 
		E_{P_{\theta}} 
		[ 
		\esp{\tp \in \calA (t,\theta)} 
		E_{P_{\tp}} [ | X - X_n | | \calF_t ] 
		]\\
		&=
		\sup_{\tp \in \calA (t,\theta)} 
		E_{P_{\tp}} [ | X - X_n | ]\\
		&\le 
		\E [ | X - X_n | ] ,
	\end{align}
	where the equality follows from Yan's commutation theorem 
	and the identity $P_{\theta} = P_{\tp}$ on $\calF_t$ for $\tp \in \calA (t,\theta)$.
	Therefore both $I_n$ and $I\!I_n$ converge to $0$ as $n \to \infty$, 
	which yields \eqref{esp} for $X \in \calL^1_G (\Omega)$.
\end{proof}

As a consequence of \lref{cG}, we have the following characterization of symmetric $G$-martingales.

\begin{prop}\label{symG}
	$X = \{ X_t ; 0 \le t \le T \}$ is a symmetric $G$-martingale 
	on $(\Omega , \calL^1_G (\Omega) , \E)$
	if and only if $X_t \in \calL^1_G (\Omega_t)$ 
	for all $t \in [0,T]$ and $X$ is a $P_{\theta}$-martingale 
	for each $\theta \in \cA$.
\end{prop}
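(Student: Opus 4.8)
The plan is to read off the proposition directly from the essential-supremum representation of the conditional $G$-expectation proved in \lref{cG}, combined with the elementary duality $\esp{}(-\,\cdot\,) = -\,\operatorname*{ess\,inf}(\cdot)$. Throughout I write $\nFB_s$ for the natural filtration of $B$, with respect to which the classical martingale property is understood, and I recall from \tref{chara} that for elements of $\calL^1_G$ equality in the $\calL^1_G$-norm is the same as quasi-sure equality, i.e.\ $P_{\theta}$-a.s.\ equality for every $\theta \in \cA$ simultaneously, since $\E[|\,\cdot\,|] = \oEt[|\,\cdot\,|] = \sup_{\theta} E_{P_{\theta}}[|\,\cdot\,|]$.

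For the ``only if'' direction, suppose $X$ is a symmetric $G$-martingale; then $X_t \in \calL^1_G(\Omega_t)$ for all $t$ by the definition of a $G$-martingale, and it remains to produce the $P_{\theta}$-martingale property. Fix $0 \lqs s \lqs t \lqs T$ and $\theta \in \cA$. Applying \lref{cG} at time $s$ (with reference $\theta$) to both $X_t$ and $-X_t$, the two $G$-martingale identities $\E_s[X_t] = X_s$ and $\E_s[-X_t] = -X_s$ translate, $P_{\theta}$-a.s., into
\[
\esp{\tp \in \calA(s,\theta)} E_{P_{\tp}}[X_t | \nFB_s] = X_s = \operatorname*{ess\,inf}_{\tp \in \calA(s,\theta)} E_{P_{\tp}}[X_t | \nFB_s].
\]
The key observation is that $\theta \in \calA(s,\theta)$, so that $E_{P_{\theta}}[X_t | \nFB_s]$ is sandwiched $P_{\theta}$-a.s.\ between the essential infimum and the essential supremum, both of which equal $X_s$; hence $E_{P_{\theta}}[X_t | \nFB_s] = X_s$ $P_{\theta}$-a.s. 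Since $s,t,\theta$ were arbitrary, $X$ is a $P_{\theta}$-martingale for every $\theta \in \cA$.

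For the ``if'' direction, assume $X_t \in \calL^1_G(\Omega_t)$ for all $t$ and that $X$ is a $P_{\theta}$-martingale for each $\theta \in \cA$. Fix $s \lqs t$ and $\theta \in \cA$, and take any $\tp \in \calA(s,\theta)$. Because $X$ is also a $P_{\tp}$-martingale, $E_{P_{\tp}}[X_t | \nFB_s] = X_s$ $P_{\tp}$-a.s.; as $P_{\tp} = P_{\theta}$ on $\nFB_s$ and both sides are $\nFB_s$-measurable, the same equality holds $P_{\theta}$-a.s. Thus the family $\{E_{P_{\tp}}[X_t|\nFB_s]\}_{\tp \in \calA(s,\theta)}$ is $P_{\theta}$-a.s.\ constantly equal to $X_s$, and taking the essential supremum and invoking \lref{cG} yields $\E_s[X_t] = X_s$ $P_{\theta}$-a.s.; letting $\theta$ range over $\cA$ gives the equality quasi-surely, hence in $\calL^1_G(\Omega_s)$. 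Since $-X$ is again a $P_{\theta}$-martingale for every $\theta$, the same argument gives $\E_s[-X_t] = -X_s$, so both $X$ and $-X$ are $G$-martingales and $X$ is a symmetric $G$-martingale.

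The step I expect to demand the most care is the bookkeeping between the three notions of equality at play — $P_{\theta}$-a.s.\ for a single $\theta$, quasi-sure, and equality in the $\calL^1_G$-norm — together with the verification that the inclusion $\theta \in \calA(s,\theta)$ and the ess-sup/ess-inf sandwich genuinely upgrade the two one-sided $G$-martingale identities into the classical $P_{\theta}$-martingale property. The auxiliary fact $P_{\tp} = P_{\theta}$ on $\nFB_s$ for $\tp \in \calA(s,\theta)$, already exploited in the proof of \lref{cG}, is exactly what collapses the essential supremum to a constant family in the converse direction.
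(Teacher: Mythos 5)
Your proof is correct and follows essentially the same route as the paper's: both directions rest on \lref{cG} together with the inclusion $\theta \in \calA(s,\theta)$, the identity $P_{\tp} = P_{\theta}$ on $\nFB_s$, and the norm characterization $\E[|\cdot|] = \sup_{\theta} E_{P_{\theta}}[|\cdot|]$ from \tref{chara}. The only cosmetic difference is that you phrase the forward direction as an ess-sup/ess-inf sandwich, whereas the paper derives the same two one-sided inequalities by applying the ess-sup representation separately to $X$ and to $-X$.
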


\begin{proof}
	We start with the if part.
	The condition that $X_t \in \calL^1_G (\Omega_t) ,~ t \in [0,T]$, 
	means that $X$ is a process on $(\Omega , \calL^1_G (\Omega) , \E)$.
	If $X$ is also a $P_{\theta}$-martingale for each $\theta \in \cA$, 
	we have, for $0 \le s \le t \le T$,
	\begin{align}
		X_s 
		= 
		\esp{\tp \in \calA (s, \theta)} 
		E_{P_{\tp}} [ X_t | \calF_s ] 
		\qd P_{\theta} \n{-a.s.}
	\end{align}
	By \lref{cG}, it follows that $X_s = \E_s [X_t] ~ P_{\theta}$-a.s.\ and that
	\begin{align}
		\E [ | \E_s [X_t] - X_s | ] = 0 .
	\end{align}
	Similarly, we have $\E_s [-X_t] = -X_s$ in $\calL^1_G (\Omega_s)$ 
	and hence $X$ is a symmetric $G$-martingale.

	Conversely, if $X$ is a symmetric $G$-martingale, 
	then $X_t \in \calL^1_G (\Omega_t)$ for all $t \in [0,T]$.
	Since $X$ is a $G$-martingale,
	\begin{align}
		0 
		= 
		\E [ | \E_s [X_t] - X_s | ] 
		= 
		\sup_{\theta \in \cA} 
		E_{P_{\theta}} [ | \E_s [X_t] - X_s | ] .
	\end{align}
	Therefore, for every $\theta \in \cA$, we have by \lref{cG},
	\begin{align}
		X_s 
		= 
		\E_s [X_t] 
		= 
		\esp{\tp \in \calA (t, \theta)} 
		E_{P_{\tp}} [X_t | \calF_s ] 
		\ge 
		E_{P_{\theta}} [X_t | \calF_s ] 
		\qd P_{\theta} \n{-a.s.}
	\end{align}
	Similarly, we deduce $X_s \le E_{P_{\theta}} [X_t | \calF_s ]$ 
	$P_{\theta}$-a.s.\ from that $-X$ is a $G$-martingale.
	Hence, $X$ is a $P_{\theta}$-martingale for each $\theta \in \cA$.
\end{proof}

\subsection{Girsanov's formula for $\bm{G}$-Brownian motion}\label{junbi}

Let $h \in (M^2_G (\Omega))^d$.
We define, for $0 \le t \le T$,
\begin{align}\label{D}
	&D_t 
	:= 
	\exp 
	\left( 
	\int_0^t h_s \cdot dB_s 
	- \frac{1}{2} \int_0^t h_s \cdot (d \qv{B}_s \, h_s) 
	\right) , \\
	&\hat{B}_t 
	:= 
	B_t - \int_0^t (d \qv{B}_s \, h_s) ,
\end{align}
and we set 
\begin{align}
	\hat{C}_{b,Lip} (\Omega) 
	:= 
	\{ 
	\vp ( \hat{B}_{t_1} \ddd \hat{B}_{t_n}) : 
	n \in \N ,~ t_1 \ddd t_n \in [0,T] ,~ \vp \in \bLip ( ( \R^d )^n) 
	\} .
\end{align}
As $\hat{B}_t \in (\calL^1_G ( \Omega_t ))^d$ for each $t \in [0,T]$,
we may deduce from \tref{chara} that 
$\hat{C}_{b,Lip} ( \Omega )$ is a subspace of $\calL^1_G ( \Omega )$.

Girsanov's formula for $G$-Brownian motion is stated as follows.

\begin{thm}\label{GGir,thm}
	Assume that there exists $\sigma_0 >0$ such that 
	\begin{align}\label{unif,nondege,eq}
		\gamma \gamma^* 
		\ge 
		\sigma_0 I_d 
		\qn{for all } \gamma \in \Theta , 
	\end{align}
	and that $D$ is a symmetric $G$-martingale on $(\Omega , \calL^1_G (\Omega) , \E)$.
	Define a sublinear expectation $\hat{\E}$ by
	\begin{align}\label{hatE}
		\hat{\E} [X] 
		:= 
		\E [X D_T] 
		\qn{for } X \in \hat{C}_{b,Lip} ( \Omega ) .
	\end{align}
	Let $\hat{\calH}$ be the completion of $\hat{C}_{b,Lip} (\Omega)$ 
	under the norm $\hat{\E} [| \cdot |]$,
	and extend $\hat{\E}$ to a unique sublinear expectation on $\hat{\calH}$.
	Then the process $\{ \hat{B}_t ; 0 \le t \le T \}$ is a $G$-Brownian motion 
	on the sublinear expectation space $( \Omega , \hat{\calH} , \hat{\E} )$.
\end{thm}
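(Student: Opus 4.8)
The plan is to show that the canonical process $\hat B$, viewed on $(\Omega,\hat\calH,\hat\E)$, carries exactly the same finite-dimensional sublinear distribution as the original $G$-Brownian motion $B$ on $(\Omega,\calL^1_G(\Omega),\E)$; that is, to establish
\begin{align}\label{plan-star}
\hat\E[\vp(\hat B_{t_1}\ddd\hat B_{t_n})]=\E[\vp(B_{t_1}\ddd B_{t_n})]
\end{align}
for every $n\in\N$, $0\lqs t_1<\dots<t_n\lqs T$ and $\vp\in\bLip((\R^d)^n)$. Once \eqref{plan-star} is available, conditions (i)--(iii) of \dref{GBm,def} for $\hat B$ under $\hat\E$ follow from the corresponding identities for $B$ under $\E$ by composing $\vp$ with the increment maps. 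Beforehand I would check that \eqref{hatE} indeed defines a sublinear expectation: monotonicity, subadditivity and positive homogeneity are inherited from $\E$, while $\hat\E[c]=c\,\E[D_T]=c$ because the symmetric $G$-martingale property of $D$ gives, via \pref{symG}, $E_{P_\theta}[D_T]=D_0=1$ for every $\theta\in\cA$, whence $\E[D_T]=\ol{\E}[D_T]=1$ by \tref{chara}.

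By \tref{chara} and the definition \eqref{hatE},
\begin{align*}
\hat\E[\vp(\hat B_{t_1}\ddd\hat B_{t_n})]
=\E[\vp(\hat B_{t_1}\ddd\hat B_{t_n})\,D_T]
=\sup_{\theta\in\cA}E_{P_\theta}[\vp(\hat B_{t_1}\ddd\hat B_{t_n})\,D_T],
\end{align*}
so \eqref{plan-star} reduces to an assertion about classical expectations. For each fixed $\theta\in\cA$, \pref{symG} shows that $D$ is a genuine $P_\theta$-martingale, so $Q_\theta:=D_T\,P_\theta$ is a probability measure and $E_{P_\theta}[\,\cdot\,D_T]=E_{Q_\theta}[\,\cdot\,]$. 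Under $P_\theta$ the canonical process $B$ is a continuous martingale with $\qv{B}_t=\int_0^t\theta_s\theta_s^*\,ds$, the $G$-It\^o integral $\int_0^\cdot h_s\cdot dB_s$ coincides with the classical one, and $D$ is precisely its Dol\'eans--Dade exponential. Classical Girsanov's theorem then yields that under $Q_\theta$ the process $\hat B=B-\int_0^\cdot(d\qv{B}_s h_s)$ is a continuous martingale with the unchanged quadratic variation $\qv{\hat B}_t=\int_0^t\theta_s\theta_s^*\,ds$.

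The heart of the proof is to identify the family of laws $\{\,\text{Law}_{Q_\theta}(\hat B):\theta\in\cA\,\}$ with the family of martingale measures representing $\E$ on functionals of $B$. Writing $\Sigma:=\{\gamma\gamma^*:\gamma\in\Theta\}$, the uniform nondegeneracy \eqref{unif,nondege,eq} makes $\theta_s\theta_s^*$ invertible with bounded inverse, so $\tilde W_t:=\int_0^t(\theta_s\theta_s^*)^{-1/2}\,d\hat B_s$ is, by L\'evy's characterization, a $Q_\theta$-Brownian motion and $\hat B_t=\int_0^t(\theta_s\theta_s^*)^{1/2}\,d\tilde W_s$. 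Hence $\text{Law}_{Q_\theta}(\hat B)=P_{\bar\theta}$ for $\bar\theta_s:=(\theta_s\theta_s^*)^{1/2}$, which takes values in the enlarged set $\tilde\Theta:=\{(\gamma\gamma^*)^{1/2}:\gamma\in\Theta\}$. Since $\tilde\Theta$ is again bounded, closed and uniformly nondegenerate and satisfies $\{AA^*:A\in\tilde\Theta\}=\Sigma$, it defines the same function $G$ and hence the same $G$-expectation $\E$; consequently $\sup_{\theta'\in\calA^{\tilde\Theta}_{0,T}}E_{P_{\theta'}}[\vp(B_{t_1}\ddd B_{t_n})]=\E[\vp(B_{t_1}\ddd B_{t_n})]$ by \tref{chara} applied with $\tilde\Theta$. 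This inclusion gives the inequality $\lqs$ in \eqref{plan-star}; for the reverse one I would, given a target $\theta'\in\calA^{\tilde\Theta}_{0,T}$, use a measurable selection (legitimate because $\Sigma$ is compact and $\gamma\mapsto\gamma\gamma^*$ is continuous) to produce $\theta\in\cA$ for which $\text{Law}_{Q_\theta}(\hat B)=P_{\theta'}$, so the two suprema coincide and \eqref{plan-star} follows.

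I expect the main obstacle to be precisely this identification of laws. Both the extraction of the driving Brownian motion $\tilde W$ (and hence the representation of $\text{Law}_{Q_\theta}(\hat B)$ as some $P_{\bar\theta}$) and the reverse measurable selection rely crucially on the uniform nondegeneracy \eqref{unif,nondege,eq}: without it the square roots $(\theta_s\theta_s^*)^{1/2}$ need not be invertible and the correspondence collapses. Care is also needed with the measurability of $\bar\theta$ relative to the filtration of $\tilde W$ (so that $P_{\bar\theta}$ is genuinely of the required form), and with the integrability ensuring $\vp(\hat B_{t_1}\ddd\hat B_{t_n})\,D_T\in\calL^1_G(\Omega)$, so that \tref{chara} applies throughout.
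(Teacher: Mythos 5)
Your reduction to classical expectations under $Q_\theta$, and the observation that $\hat B$ is a $Q_\theta$-martingale with unchanged quadratic variation, are sound and coincide with the paper's starting point. The argument breaks down, however, at the identification $\mathrm{Law}_{Q_\theta}(\hat B)=P_{\bar\theta}$ with $\bar\theta_s=(\theta_s\theta_s^*)^{1/2}$. By definition, $P_{\theta'}$ is the law of $\int_0^\cdot\theta'_s\,dW_s$ for an integrand that is progressively measurable with respect to the filtration of the driving Brownian motion itself. Your $\bar\theta$ is adapted to the original filtration, not to the one generated by the new Brownian motion $\tilde W$, and after the change of measure there is no reason these filtrations agree; what you obtain is only that $Q_\theta\circ\hat B^{-1}$ is a martingale law whose quadratic variation has density in $\{\gamma\gamma^*:\gamma\in\Theta\}$, not that it equals some $P_{\theta'}$. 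This weak-versus-strong formulation discrepancy is not the deferred ``care point'' at the end of your proposal --- it is the central difficulty of the theorem. The paper resolves it by enlarging the class of measures: it introduces the family $\mathscr{P}$ of \emph{all} martingale measures with quadratic-variation density in $\{\gamma\gamma^*:\gamma\in\Theta\}$ and proves (\lref{sup,mar,lem}) that $\sup_{P\in\mathscr{P}}E_P[X]=\E[X]$ for $X\in\bLip(\Omega)$, via It\^o's formula applied to $C^{1,2}$ solutions of the $G$-heat equation (exactly where the nondegeneracy hypothesis \eqref{unif,nondege,eq} enters) together with an induction on the number of time points. With that lemma, $Q_\theta\circ\hat B^{-1}\in\mathscr{P}$ immediately yields $\hat\E[\vp(\hat B)]\lqs\E[\vp(B)]$, with no need to represent $Q_\theta\circ\hat B^{-1}$ in the form $P_{\theta'}$; your proposal contains no substitute for this lemma.

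The opposite inequality $\E[\vp(B)]\lqs\hat\E[\vp(\hat B)]$ is also not established by your sketch. A measurable selection of $\gamma\in\Theta$ with $\gamma\gamma^*$ equal to a prescribed matrix handles only the pointwise square-root ambiguity; what is actually needed is, for a given $\theta\in\cA$, another control whose associated measure $Q$ pushes $\hat B$ onto the law $P_\theta$. Since the removed drift $\int_0^\cdot(d\qv{B}_sh_s)$ depends on the path of $B$ itself, this is an implicit, fixed-point problem, not a selection problem. The paper solves it only for \emph{simple} $\theta$, through the recursive, interval-by-interval construction of a twisted process $\tilde\theta$ satisfying $\tilde\theta_t=\theta_t(W-\int_0^\cdot\tilde\theta_s^*\,h_s^{(\tilde\theta)}ds)$, followed by the classical Girsanov theorem on the $W$-space and an $L^2$-approximation of a general $\theta\in\cA$ by simple processes; nothing in your proposal plays this role. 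A smaller omission: you never verify that $\hat B_t\in\hat\calH^d$, i.e.\ that $\hat B$ is a stochastic process on $(\Omega,\hat\calH,\hat\E)$ at all; the paper needs a separate lemma (\lref{process}), proved by a time-change and uniform-integrability argument, for exactly this point.
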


We remark that the uniform nondegeneracy of $\Theta$ is also assumed in \cite{Xu;10}.

We prove \tref{GGir,thm} in the next subsection.
Before we proceed to the proof, there are several things we must verify.
The first thing is the well-definedness of the right-hand side of \eqref{hatE},
which is immediate from \tref{chara} since $D_T$ is in $\calL^1_G ( \Omega )$ 
by assumption and $X$ is a bounded element of $\calL^1_G ( \Omega )$. 
The second is that the functional $\hat{\E}$ defined by \eqref{hatE} is indeed a sublinear expectation.
As the assumption on $D$ also yields $\E [ D_T ] = - \E [ -D_T ] =1$, 
this functional possesses the property (ii) in \dref{slinear,def}.
The other three properties follow readily from the definition.
The last thing to be verified prior to the proof of \tref{GGir,thm}
is that $\{ \hat{B}_t ; 0 \le t \le T \}$ is a stochastic process on 
$(\Omega , \hat{\calH} , \hat{\E})$,
which we will check in the next lemma.
For a fixed $\theta \in \cA$, set
\begin{align}\label{Q}
	Q_{\theta} (A) 
	:= 
	E_{P_{\theta}} [\one_A D_T] 
	\qn{for }A \in \calB (\Omega).
\end{align}
Note that, by \tref{chara}, we have
\begin{align}\label{upperQ}
	\hat{\E} [X] 
	= 
	\sup_{\theta \in \cA} E_{Q_{\theta}} [X]
\end{align}
for all $X \in \hat{C}_{b,Lip} (\Omega)$.

\begin{lem}\label{process}
	For all $t \in [0,T]$, we have $\hat{B}_t \in \hat{\calH}^d$. 
	Therefore $\hat{B}$ is a stochastic process on $(\Omega , \hat{\calH} , \hat{\E})$.
\end{lem}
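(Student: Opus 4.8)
The plan is to exhibit each coordinate $\hat{B}^i_t$ as a $\hat{\E}[\,|\cdot|\,]$-limit of elements of $\hat{C}_{b,Lip}(\Omega)$, which is exactly what membership in $\hat{\calH}$ requires. First I would record that $\hat{B}^i_t=B^i_t-\sum_{j=1}^d\int_0^t h^j_s\,d\qv{B^i,B^j}_s$ is already a bona fide function in $\calL^1_G(\Omega_t)$: the term $B^i_t$ lies in $\calL^1_G(\Omega_t)$ because $B^i\in M^2_G(\Omega)$ and its marginals lie in $\calL^2_G(\Omega_t)\subset\calL^1_G(\Omega_t)$, while each $\int_0^t h^j_s\,d\qv{B^i,B^j}_s$ lies in $\calL^1_G(\Omega_t)$ since $h^j\in M^2_G(\Omega)\subset M^1_G(\Omega)$ and such integrals against $d\qv{B^i,B^j}$ were defined as elements of $\calL^1_G(\Omega_t)$. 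By \tref{chara}, $\hat{B}^i_t$ then admits a quasi-continuous version, so the limit constructed below is genuinely a function.

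Next I would introduce the truncations. For $N\in\N$ put $\psi_N(r)\eqdef (r\wedge N)\vee(-N)$ and $\xi^i_N\eqdef\psi_N(\hat{B}^i_t)$. Since $x\mapsto\psi_N(x^i)$ is bounded and Lipschitz on $\R^d$ and $\hat{B}_t$ is a single time-$t$ marginal, we have $\xi^i_N\in\hat{C}_{b,Lip}(\Omega)$, and likewise $|\xi^i_M-\xi^i_N|\in\hat{C}_{b,Lip}(\Omega)$ for all $M,N$. The elementary pointwise bound $|\psi_M(r)-\psi_N(r)|\lqs |r|\one_{\{|r|>N\}}$ (for $M>N$) gives $|\xi^i_M-\xi^i_N|\lqs|\hat{B}^i_t|\one_{\{|\hat{B}^i_t|>N\}}$.

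The core step is to show $\{\xi^i_N\}_N$ is Cauchy in $\hat{\E}[\,|\cdot|\,]$. Using the representation \eqref{upperQ}, which is available on $\hat{C}_{b,Lip}(\Omega)$, I would estimate for $M>N$
\begin{align*}
\hat{\E}[|\xi^i_M-\xi^i_N|]=\sup_{\theta\in\cA}E_{Q_{\theta}}[|\xi^i_M-\xi^i_N|]\lqs\sup_{\theta\in\cA}E_{Q_{\theta}}\big[|\hat{B}^i_t|\one_{\{|\hat{B}^i_t|>N\}}\big]\lqs\frac{1}{N}\sup_{\theta\in\cA}E_{Q_{\theta}}[|\hat{B}^i_t|^2].
\end{align*}
To control the last supremum I would work on each classical space $(\Omega,\calB(\Omega),P_{\theta})$. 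By \pref{symG}, $D$ is a $P_{\theta}$-martingale with $E_{P_{\theta}}[D_T]=1$, so $Q_{\theta}$ is a probability measure with density $D_T$, and $D$ is the stochastic exponential of the $P_{\theta}$-martingale $\int_0^{\cdot}h_s\cdot dB_s$ (the $G$-It\^o integral and $G$-quadratic variation agreeing $P_{\theta}$-a.s.\ with their classical counterparts). The classical Girsanov theorem then makes $\hat{B}^i$ a continuous $Q_{\theta}$-local martingale with $\qv{\hat{B}^i}_t=\qv{B^i}_t=\int_0^t(\theta_s\theta_s^*)_{ii}\,ds$. Since $\Theta$ is bounded, with $\Lambda\eqdef\sup_{\gamma\in\Theta}\|\gamma\|$ we get $\qv{\hat{B}^i}_T\lqs\Lambda^2T$, whence $\hat{B}^i$ is a true $L^2(Q_{\theta})$-martingale and $E_{Q_{\theta}}[|\hat{B}^i_t|^2]=E_{Q_{\theta}}[\qv{\hat{B}^i}_t]\lqs\Lambda^2T$, uniformly in $\theta$. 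Thus $\hat{\E}[|\xi^i_M-\xi^i_N|]\lqs\Lambda^2T/N\to0$, so $\{\xi^i_N\}$ is Cauchy.

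By completeness of $\hat{\calH}$ the sequence $\xi^i_N$ converges in $\hat{\calH}$; since $\psi_N(\hat{B}^i_t)\to\hat{B}^i_t$ everywhere, the limit is $\hat{B}^i_t$, giving $\hat{B}_t\in\hat{\calH}^d$, and the process assertion is then immediate. I expect the main obstacle to be the uniform moment bound $\sup_{\theta}E_{Q_{\theta}}[|\hat{B}^i_t|^2]<\infty$: this is where one leaves the $G$-framework, applies the classical Girsanov theorem on each $(\Omega,P_{\theta})$ to identify $\hat{B}$ as a $Q_{\theta}$-martingale, and uses the boundedness of $\Theta$ to render the bound independent of $\theta$. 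A secondary subtlety is the identification of the abstract completion limit with the function $\hat{B}^i_t$, which rests on the quasi-sure realization of $\hat{\calH}$.
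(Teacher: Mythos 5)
Your proof is correct and follows essentially the same route as the paper: truncate $\hat{B}^i_t$ by bounded Lipschitz functions, use \eqref{upperQ} to reduce everything to a uniform-in-$\theta$ bound, and obtain that bound by applying the classical Girsanov theorem under each $P_{\theta}$ to identify $\hat{B}^i$ as a $Q_{\theta}$-local martingale with $\qv{\hat{B}^i}=\qv{B^i}$ bounded by a constant depending only on $\Theta$ and $T$. The only deviation is in how the uniform moment bound is extracted: the paper passes through the Dambis--Dubins--Schwarz time change to control all $p$-th moments, whereas you use the $L^2$-identity $E_{Q_{\theta}}[|\hat{B}^i_t|^2]=E_{Q_{\theta}}[\qv{\hat{B}^i}_t]$ for local martingales with bounded quadratic variation (Corollary IV.1.25 of Revuz--Yor, the very result the paper invokes for \lref{hatB,mar}), which is equally valid and slightly more economical.
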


\begin{proof}
	Fix $i = 1 \ddd d$ and take an arbitrary $\theta \in \cA$.
	By definition, the $i$-th coordinate $B^i$ 
	of the canonical process $B$ is a $P_{\theta}$-martingale.
	Note that the process $D$ is also a $P_{\theta}$-martingale by \pref{symG}
	and satisfies the following relation with $B^i$ and $\hat{B}^i$:
	\begin{align}
		\hat{B}^i_t 
		= B^i_t - \int_0^t \frac{ d \qv{D,B^i}_s }{D_s} .
	\end{align}
	Therefore, by Girsanov's formula, $\hat{B}^i$ 
	is a local martingale under $Q_{\theta}$ and
	\begin{align}
		\qv{\hat{B}^i}_t 
		= \qv{B^i}_t 
		\qn{for all } t \in [0,T] ,~ 
		Q_{\theta} \n{-a.s.\ and } P_{\theta} \n{-a.s.}
	\end{align}
	By definition, $\qv{B^i}_T$ under $P_{\theta}$ is identical in law 
	with $\int_0^T (\theta_s \theta^*_s )^{ii} \, ds$,
	where $(\theta_s \theta^*_s )^{ii}$ is 
	the $(i,i)$-entry of the matrix $\theta_s \theta^*_s$.
	We thus deduce that, by the boundedness of $\Theta$, 
	there exists a constant $C>0$ depending only on $\Theta$ such that
	\begin{align}\label{qv}
		\qv{\hat{B^i}}_T 
		\le 
		CT 
		\qd Q_{\theta} \n{-a.s.}
	\end{align}
	Moreover, by the time-change formula due to Dambis-Dubins-Schwarz 
	(see, e.g., \cite{Karatzas;91} Theorem 3.4.6),
	there exists a standard Brownian motion $\beta$ 
	under $Q_{\theta}$ such that
	\begin{align}
		\hat{B}^i_t 
		= 
		\beta_{\qv{\hat{B}^i}_t} 
		\qn{for all } t \in [0,T] ,~ Q_{\theta} \n{-a.s.}
	\end{align}
	Combining these, we have, for some $p>1$ (actually, for all $p>1$),
	\begin{align}\label{unif,hatB}
		\sup_{\theta \in \cA} 
		E_{Q_{\theta}} [|\hat{B}^i_t|^p]
		\le 
		\sup_{\theta \in \cA} 
		E_{Q_{\theta}} [ \max_{0 \le t \le CT} | \beta_t |^p]
		= 
		E_P [\max_{0 \le t \le CT} |W_t|^p]
		< 
		\infty ,
	\end{align}
	where $W$ is a one-dimensional Brownian motion under a probability measure $P$.

	Now define the sequence 
	$\{ \vp_n ( \hat{B}_t^i ) \}_{n=1}^{\infty} \subset \hat{C}_{b,Lip} (\Omega)$ through
	\begin{align}
		\vp_n (x) 
		:= 
		(x \wedge n) \vee (-n) 
		\qn{for } x \in \R .
	\end{align}
	This approximates $\hat{B}^i_t$ under the norm $\hat{\E} [| \cdot |]$.
	Indeed, by \eqref{unif,hatB}
	\begin{align}\label{dense}
		\sup_{\theta \in \cA} 
		E_{Q_{\theta}} [|\hat{B}^i_t - \vp_n (\hat{B}^i_t)|]
		\le 
		\sup_{\theta \in \cA} 
		E_{Q_{\theta}} [|\hat{B}^i_t| \one_{\{ |\hat{B}^i_t| >n \}}]
		\to 0 \qd (n \to \infty ) .
	\end{align}
	By noting that, from \eqref{upperQ},
	$\hat{\calH}$ can be seen as the completion of 
	$\hat{C}_{b,Lip} (\Omega)$ under the norm 
	$\sup_{\theta \in \cA} E_{Q_{\theta}} [| \cdot |]$,
	\eqref{dense} shows $\hat{B}^i_t \in \hat{\calH}$.
\end{proof}

In the proof of \tref{GGir,thm}, 
it will also be required that $\hat{B}$ is a true martingale under $Q_{\theta}$,
which follows immediately from \eqref{qv} and Corollary IV.1.25 of \cite{Revuz;99}.
We state it in the lemma.

\begin{lem}\label{hatB,mar}
	For each $\theta \in \cA$, 
	the process $\{ \hat{B}_t ; 0 \le t \le T \}$ is a $Q_{\theta}$-martingale.
\end{lem}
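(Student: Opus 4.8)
The plan is to reduce the assertion to a standard integrability criterion for continuous local martingales, using the computations already carried out in the proof of \lref{process}. Fix $\theta \in \cA$ and a coordinate $i \in \{ 1 \ddd d \}$. First I would recall that, as established in \lref{process}, the classical Girsanov theorem applied to the $P_{\theta}$-martingales $B^i$ and $D$ shows that $\hat{B}^i$ is a continuous local martingale under $Q_{\theta}$, and that its quadratic variation satisfies the uniform bound \eqref{qv}, namely $\qv{\hat{B^i}}_T \lqs CT$ $Q_{\theta}$-a.s., where $C>0$ depends only on $\Theta$ through the boundedness of $\Theta$.

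The key step is then to upgrade $\hat{B}^i$ from a local martingale to a genuine martingale. A continuous local martingale $M$ on $[0,T]$ with $E[\qv{M}_T] < \infty$ is a square-integrable (true) martingale; this is exactly the content of Corollary IV.1.25 of \cite{Revuz;99}. The bound \eqref{qv} gives $E_{Q_{\theta}} [\qv{\hat{B^i}}_T] \lqs CT < \infty$, so the hypothesis of that corollary is met, and I would conclude that each coordinate $\hat{B}^i$ is a $Q_{\theta}$-martingale.

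Finally, since a vector-valued process is a martingale precisely when each of its coordinates is, I would assemble the coordinatewise conclusion into the statement that $\hat{B} = (\hat{B}^1 \ddd \hat{B}^d)$ is a $Q_{\theta}$-martingale; as $\theta \in \cA$ was arbitrary, this holds for every $\theta$.

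I do not expect a genuine obstacle here: the substantive work—identifying $\hat{B}^i$ as a local $Q_{\theta}$-martingale via Girsanov and deriving the almost-sure bound \eqref{qv} through the Dambis--Dubins--Schwarz time change—was already completed in \lref{process}. The only point requiring a little care is bookkeeping of the reference measure: both the local-martingale property and the quadratic-variation bound must be read off under $Q_{\theta}$ rather than $P_{\theta}$, but both were indeed verified under $Q_{\theta}$ in \lref{process}, so no further adjustment is needed.
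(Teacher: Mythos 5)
Your proposal is correct and follows essentially the same route as the paper: the paper also deduces the lemma directly from the bound \eqref{qv} and the local-martingale property of $\hat{B}^i$ under $Q_{\theta}$ established in the proof of \lref{process}, invoking Corollary IV.1.25 of \cite{Revuz;99} to upgrade to a true martingale. Your additional remarks (coordinatewise reduction, checking that the bound and the local-martingale property are both read under $Q_{\theta}$) are exactly the bookkeeping the paper leaves implicit.
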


\subsection{Proof of \tref{GGir,thm}}

A probability measure $P$ on $(\Omega , \calB (\Omega) )$ is called a \bt{martingale measure}
if the canonical process $B$ is a martingale with respect to $\F^B$ under $P$, 
where $\F^B$ is the filtration generated by $B$:
\begin{align}
	\calF^B_t 
	:= 
	\sigma( B_u , 0 \le u \le t ) \vee \calN 
	, \qd 
	\F^B 
	:= 
	\{ \calF^B_t ; 0 \le t \le T \} ,
\end{align}
where $\calN$ is the collection of all $P$-null subsets. 
Let $\scrP$ be the family of all martingale measures $P$ satisfying
\begin{align}
	\frac{d \qv{B}^P_t}{dt} 
	\in 
	\{ \gamma \gamma^* : \gamma \in \Theta \} 
	, \qn{a.e.\ } t \in [0,T] ,~ P \n{-a.s.,}
\end{align}
where $\qv{B}^P$ is the quadratic variation process of $B$ under $P$. 
First we prove

\begin{lem}\label{sup,mar,lem}
	For all $X \in \bLip (\Omega)$,
	\begin{align}
		\E [X] = \sup_{P \in \scrP} E_P [X] .
	\end{align}
\end{lem}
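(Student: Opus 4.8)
The plan is to combine the representation $\E[X]=\sup_{\theta\in\cA}E_{P_\theta}[X]$ supplied by \tref{chara} with a direct comparison of the two families of laws $\{P_\theta:\theta\in\cA\}$ and $\mathscr{P}$. Since \tref{chara} already identifies $\E[X]$ with $\sup_{\theta\in\cA}E_{P_\theta}[X]$ for $X\in\bLip(\Omega)$, it suffices to prove the two set-inclusions $\{P_\theta:\theta\in\cA\}\subseteq\mathscr{P}$ and $\mathscr{P}\subseteq\{P_\theta:\theta\in\cA\}$, from which the equality of the suprema is immediate. In fact, under the uniform nondegeneracy \eqref{unif,nondege,eq} I expect to obtain the stronger statement that these two families of measures coincide.

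First I would dispose of the inclusion $\{P_\theta\}\subseteq\mathscr{P}$, which yields $\E[X]\lqs\sup_{P\in\mathscr{P}}E_P[X]$. For $\theta\in\cA$ the integrand is bounded (as $\Theta$ is bounded), so $\int_0^\cdot\theta_s\,dW_s$ is a genuine martingale; hence under $P_\theta$ the canonical process $B$ is an $\F^B$-martingale with $d\qv{B}^{P_\theta}_t/dt=\theta_t\theta_t^*\in\{\gamma\gamma^*:\gamma\in\Theta\}$ for a.e.\ $t$, so that $P_\theta\in\mathscr{P}$.

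The substantial direction is $\mathscr{P}\subseteq\{P_\theta\}$, giving $\sup_{P\in\mathscr{P}}E_P[X]\lqs\E[X]$. Fix $P\in\mathscr{P}$ and set $a_t\eqdef d\qv{B}^P_t/dt\in\{\gamma\gamma^*:\gamma\in\Theta\}$. The nondegeneracy \eqref{unif,nondege,eq} forces every admissible $\gamma$ to be invertible with $\gamma^{-1}$ bounded by $\sigma_0^{-1/2}$, so a measurable selection theorem applied to the closed-valued correspondence $(t,\omega)\mapsto\{\gamma\in\Theta:\gamma\gamma^*=a_t(\omega)\}$ produces a $\Theta$-valued, $\F^B$-progressively measurable $\gamma$ with $\gamma_t\gamma_t^*=a_t$. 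Then $W\eqdef\int_0^\cdot\gamma_s^{-1}\,dB_s$ satisfies $\qv{W}_t=\int_0^t\gamma_s^{-1}a_s(\gamma_s^{-1})^*\,ds=tI_d$, so $W$ is a $d$-dimensional $P$-Brownian motion by Lévy's characterization, and $B=\int_0^\cdot\gamma_s\,dW_s$. Invertibility makes $\F^B=\F^W$ up to $P$-null sets, so $\gamma$ is $\F^W$-progressive and hence representable as a measurable functional $\gamma=\Gamma(W)$ of the Brownian path; transplanting $\Gamma$ to the canonical space, $\theta\eqdef\Gamma(W)$ lies in $\cA$ and $P=P_\theta$, whence $E_P[X]=E_{P_\theta}[X]\lqs\E[X]$.

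The main obstacle I anticipate is precisely the selection-and-transfer step. Unlike the ellipsoidal set $\{\sigma_0 I_d\lqs\gamma\gamma^*\lqs\sigma_1 I_d\}$ treated in \cite{Soner;10a}, here $\Theta$ is an arbitrary bounded closed set, so one must verify that the correspondence above has measurable (indeed progressively measurable) graph and admits a progressively measurable selection, and then that the null-set identification $\F^B=\F^W$ legitimately lets $\gamma$ be written as a functional of $W$ that can be moved to the fixed probability space carrying the driving Brownian motion. It is exactly at this point that \eqref{unif,nondege,eq} is used in an essential way. The remaining ingredients (Lévy's characterization, the quadratic-variation computation, and the identity $\int\gamma\,dW=B$) are routine.
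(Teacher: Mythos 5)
Your first inclusion $\{P_\theta:\theta\in\cA\}\subseteq\mathscr{P}$, giving $\E[X]\lqs\sup_{P\in\mathscr{P}}E_P[X]$, matches the paper. The other direction is where the proposal breaks down, and the step that fails is exactly the one you flagged: the assertion that invertibility of $\gamma$ gives $\F^B=\F^W$ up to null sets. From $W=\int_0^{\cdot}\gamma_s^{-1}\,dB_s$ you only get $\FW_t\subseteq\FB_t$; the reverse inclusion says that $B$, which solves $dB_t=\gamma_t\,dW_t$ in the weak sense, is adapted to the filtration of its driving Brownian motion, i.e.\ is a \emph{strong} solution, and uniform ellipticity does not give this. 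A first warning sign: for $d=1$, $\Theta=\{-1,1\}$, the Wiener measure, and the perfectly admissible selection $\gamma_t=\mathrm{sgn}(B_t)$, Tanaka's formula gives $W_t=|B_t|-L^0_t$ (local time at $0$), so $\F^W=\F^{|B|}\subsetneq\F^B$. Worse, the conclusion you aim for, $\mathscr{P}\subseteq\{P_\theta:\theta\in\cA\}$, is itself false, so no refinement of the measurable-selection step can rescue the argument. Barlow constructed a continuous $\sigma:\R\to[\sigma_0,\sigma_1]$, $0<\sigma_0<\sigma_1<\infty$, for which $dX_t=\sigma(X_t)\,dW_t$ admits a weak but no strong solution. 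Take $d=1$, $\Theta=[\sigma_0,\sigma_1]$, and let $Q\in\mathscr{P}$ be the law of a weak solution. If $Q=P_\theta$ for some $\theta\in\cA$, then $X'\eqdef\int_0^{\cdot}\theta_s\,dW_s$ has law $Q$, so $\theta_t^2=\sigma(X'_t)^2$ $dt\times P$-a.e.\ (both are densities of $\qv{X'}$), hence $\theta_t=\sigma(X'_t)$ by positivity of $\Theta$; but then $X'$ is an $\F^W$-adapted, hence strong, solution of Barlow's equation --- a contradiction. So the two families of measures do \emph{not} coincide; only their suprema over $\bLip(\Omega)$ agree, and that is a genuinely weaker statement.

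This is why the paper (like \cite{Denis;10} and Proposition 3.4 of \cite{Soner;10a}) never compares the classes of measures. Instead it proves $\sup_{P\in\mathscr{P}}E_P[\vp(B_{t_1},\dots,B_{t_n})]\lqs\E[\vp(B_{t_1},\dots,B_{t_n})]$ by induction on $n$: under each fixed $P\in\mathscr{P}$, It\^o's formula is applied to $v(t,B_t)$, where $v$ solves the $G$-heat equation with data built from $\vp$; assumption \eqref{unif,nondege,eq} is used only to guarantee $v\in C^{1,2}$ (\rref{hitaika}), and the defining property $d\qv{B}^P_t/dt\in\{\gamma\gamma^*:\gamma\in\Theta\}$ of $\mathscr{P}$ yields $\frac{1}{2}\tr\big[(D^2v)(t,B_t)\,d\qv{B}^P_t\big]\lqs G\big((D^2v)(t,B_t)\big)\,dt$, whence $E_P[\vp(B_{t_1})]\lqs v(0,0)=\E[\vp(B_{t_1})]$ and similarly in the inductive step via conditional $G$-expectations. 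If you want to repair your write-up, this PDE route is the one to take; the strong-versus-weak formulation obstacle you anticipated is not a technical wrinkle to be handled by measurable selection, but a genuine impossibility.
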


In the case that the set $\Theta \subset \R^{d \times d}$ has a form
$\{ \gamma \in \R^{d \times d}  : \sigma_0 I_d \le \gamma \gamma^* \le \sigma_1 I_d \}$
for some constants $0 < \sigma_0 \le \sigma_1$, 
this lemma follows readily from Proposition~3.4 in \cite{Soner;10a}.
Notice that the proof below does not use any structures of $\Theta$ 
other than uniform nondegeneracy \eqref{unif,nondege,eq}. 

\begin{proof}[Proof of \lref{sup,mar,lem}]
	Since $\{ P_{\theta} : \theta \in \cA \} \subset \scrP$, 
	it is clear that $\E [X] = \ol{\E} [X] \le \sup_{P \in \scrP} E_P [X]$.
	We check the reverse inequality
	\begin{align}\label{rev,ineq}
		\E [X] \ge \sup_{P \in \scrP} E_P [X] .
	\end{align}
	For each $n \in \N$, we set the statement $\mathfrak{p} (n)$ as follows:
	\begin{align}
		\mathfrak{p} (n) : 
		&\n{ For all } 0 \le t_1 < \dots < t_n \le T 
		, \n{ and } \vp \in \bLip ((\R^d)^n) ,\\
		&\sup_{P \in \scrP} 
		E_P [\vp (B_{t_1} \ddd B_{t_n})] 
		\le \E[\vp (B_{t_1} \ddd B_{t_n})] 
		\qn{holds.}
	\end{align}
	We show \eqref{rev,ineq} by induction with respect to $n$.

	(i) First we let $n=1$, and $v$ be the solution of the following $G$-heat equation:
	\begin{align}
		\left\{
		\begin{aligned}
			&- \frac{\partial v}{\partial t} - G(D^2 v) =0 
			\qn{in } (0,t_1) \times \R^d , \\
			&v |_{t=t_1} = \vp \qn{in } \R^d .
		\end{aligned}
		\right.
	\end{align}
	Note that $v \in C^{1,2} ((0,t_1) \times \R^d)$ 
	by assumption \eqref{unif,nondege,eq} (see \rref{hitaika}).
	For all $P \in \scrP$, it follows from It\^o's formula that $P$-a.s.
	\begin{align}
		\vp (B_{t_1}) 
		&= 
		v (t_1 , B_{t_1}) \\
		&= 
		v(0,0) + \int_0^{t_1} (D v)(t , B_t) \cdot dB_t \\
		&\qd 
		+ \int_0^{t_1} 
		\left( 
		-G((D^2 v)(t , B_t)) \, d t 
		+ \frac{1}{2} \tr \big[ (D^2 v)(t , B_t) \, d \qv{B}^P_t \big] 
		\right) \\
		&\le 
		v(0,0) + \int_0^{t_1} (D v)(t , B_t) \cdot dB_t .
	\end{align}
	Taking the expectation under $P$, 
	we have $E_P [\vp (B_{t_1})] \le v(0,0) = \E [\vp (B_{t_1})]$. 
	Hence
	\begin{align}
		\sup_{P \in \scrP} 
		E_P [\vp (B_{t_1})] 
		\le 
		\E [\vp (B_{t_1})] .
	\end{align}

	(ii) We now assume that $\mathfrak{p} (n)$ is true for some $n \in \N$.
	Take $0 \le t_1 < \cdots < t_n < t_{n+1} \le T$ and 
	$\vp \in C_{b,Lip} ((\R^d)^{n+1})$ to be arbitrary.
	By the definition of conditional $G$-expectations, it holds that
	\begin{align}
		\E_{t_n} [\vp (B_{t_1} \ddd B_{t_n} , B_{t_{n+1}}) ] 
		= v (t_n , B_{t_n} ; B_{t_1} \ddd B_{t_n}),
	\end{align}
	where $v(t,x ; x_1 \ddd x_n) \in C^{1,2} ((t_n , t_{n+1}) \times \R^d)$ 
	is the solution of the following $G$-heat equation:
	\begin{align}
		\left\{
		\begin{aligned}
			&- \frac{\partial v}{\partial t} - G(D_x^2 v) = 0 
			\qn{in } (t_n , t_{n+1}) \times \R^d , \\
			&v(t_{n+1} , x ; x_1 \ddd x_n) = \vp (x_1 \ddd x_n ,x) ,
			\qd x \in \R^d .
		\end{aligned}
		\right.
	\end{align}
	Under each $P \in \scrP$, we apply It\^o's formula (see \rref{ito}) 
	to $v(t_{n+1} , B_{t_{n+1}} ; B_{t_1} \ddd B_{t_n})$ to obtain $P$-a.s.
	\begin{align}
			&\vp (B_{t_1} \ddd B_{t_n} , B_{t_{n+1}})
			= 
			v( t_{n+1} , B_{t_{n+1}} ; B_{t_1} \ddd B_{t_n} ) \\
			&= 
			v ( t_n , B_{t_n} ; B_{t_1} \ddd B_{t_n} )
			+ \int_{t_n}^{t_{n+1}} 
			(D_x v) ( t , B_t ; B_{t_1} \ddd B_{t_n} ) \cdot dB_t \\
			&\qd 
			+ \int_{t_n}^{t_{n+1}} 
			\left( - G( (D_x^2 v)( t , B_t ; B_{t_1} \ddd B_{t_n} ) ) \, d t
			+\frac{1}{2} 
			\tr \big[ 
			(D_x^2 v) ( t , B_t ; B_{t_1} \ddd B_{t_n} ) \, d \qv{B}^P_t 
			\big] 
			\right) \\
			&\le 
			v ( t_n , B_{t_n} ; B_{t_1} \ddd B_{t_n} )
			+ \int_{t_n}^{t_{n+1}} 
			(D_x v) ( t , B_t ; B_{t_1} \ddd B_{t_n} ) \cdot dB_t \, . 
			\label{ito,eq}
	\end{align}
	Taking the expectation under $P$, we have
	\begin{align}
		E_P [ \vp ( B_{t_1} \ddd B_{t_n} , B_{t_{n+1}} ) ]
		\le 
		E_P [ v( t_n , B_{t_n} ; B_{t_1} \ddd B_{t_n} ) ] .
	\end{align}
	Therefore
	\begin{align}
		\sup_{P \in \scrP} 
		E_P [ \vp ( B_{t_1} \ddd B_{t_n} , B_{t_{n+1}} ) ]
		\le 
		\sup_{P \in \scrP} 
		E_P [ v ( t_n , B_{t_n} ; B_{t_1} \ddd B_{t_n} ) ] .
	\end{align}
	Notice that the function 
	$(x_1 \ddd x_n) \mapsto v(t_n , x_{n} ; x_1 \ddd x_n)$ 
	belongs to $C_{b,Lip} ((\R^d)^{n})$.
	By the assumption that $\mathfrak{p} (n)$ is true, we have
	\begin{align}
		\sup_{P \in \scrP} 
		E_P [ v ( t_n , B_{t_n} ; B_{t_1} \ddd B_{t_n} ) ]
		&\le \E[v(t_n , B_{t_n} ; B_{t_1} \ddd B_{t_n})] \\
		&= \E[\vp (B_{t_1} \ddd B_{t_n} , B_{t_{n+1}})] .
	\end{align}
	So $\mathfrak{p} (n+1)$ is also true, and hence we complete the induction argument.
\end{proof}

\begin{rem}\label{ito}
	The second equality in \eqref{ito,eq} may be seen in the following manner:
	for each $i = 1\ddd n$, define the process $M^i$ on $[ t_n , t_{n+1} ]$ by
	\begin{align}
		M^i_t := B_{t_i} , 
		\qd t_n \le t \le t_{n+1},
	\end{align}
	and set $M_t := (t, B_t , M^1_t \ddd M^n_t)$.
	Clearly $\{ M_t ; t_n \le t \le t_{n+1} \}$ is an $\F^B$-semimartingale.
	We may write $v(M_t)$ for $v(t,B_t ; B_{t_1} \ddd B_{t_n})$,
	to which It\^o's formula applies to yield the desired equality.
\end{rem}

Now we are in a position to prove \tref{GGir,thm}.

\begin{proof}[Proof of \tref{GGir,thm}]
	It is sufficient to show that for all $k \in \N,~t_1 \ddd t_k \in [0,T]$, 
	and $\vp \in C_{b,Lip} ((\R^d)^k)$,
	\begin{flalign}
		\hat{\E} [\vp (\hat{B}_{t_1} \ddd \hat{B}_{t_k})] 
		= \E [\vp (B_{t_1} \ddd B_{t_k})] .
	\end{flalign}
	Indeed, it is obvious that $\hat{B}$ satisfies \dref{GBm,def} (i).
	If we obtain the above equation, 
	it then follows from the right-hand side that $\hat{B}$ 
	satisfies Definition~\ref{GBm,def} (ii), (iii) under $\hat{\E}$.
	Note that, since $\hat{\calH}$ is the completion of $\hat{C}_{b,Lip} (\Omega)$,
	identical distributedness and independence on $(\Omega , \hat{\calH} , \hat{\E})$ can be,
	as those on $( \Omega , \calL^1_G ( \Omega ), \E)$ are, 
	checked through test functions of the class consisting of bounded,
	Lipschitz cylinder functionals (see \dref{dis} and the comment given just after it).

	For simplicity, we write $\vp (B)$ and $\vp (\hat{B})$ for 
	$\vp (B_{t_1} \ddd B_{t_k})$ and 
	$\vp (\hat{B}_{t_1} \ddd \hat{B}_{t_k})$, respectively.

	(i) First we show that $\E [\vp (B)] \le \hat{\E} [\vp (\hat{B})]$.

	It is enough to show the following:
	\begin{flalign}\label{s,apploxi,eq}
		\n{for all } \Theta \n{-valued {\em simple} process } 
		\theta \n{ on } [0,T] ,~ 
		E_{P_{\theta}} [\vp (B)] 
		\le 
		\hat{\E} [\vp (\hat{B})] .
	\end{flalign}
	To see this, we fix $\theta \in \cA$.
	Then, for all $\ve >0$, there exists a $\Theta$-valued simple process 
	$\theta^{\ve}$ on $[0,T]$ such that
	\begin{flalign}
		E_P [\int_0^T \| \theta^{\ve}_s - \theta_s \|^2 \, d s] 
		< \ve^2
	\end{flalign}
	(see, e.g., \cite{Karatzas;91} Problem~3.2.5). 
	Therefore, if \eqref{s,apploxi,eq} holds, we have
	\begin{flalign}
		&E_{P_{\theta}} 
		[\vp (B)] 
		\equiv 
		E_{P_{\theta}} 
		[\vp (B_{t_1} \ddd B_{t_k})] \\
		&\le 
		E_{P_{\theta^{\ve}}}
		[\vp (B_{t_1} \ddd B_{t_k})]
		+
		C_{\vp} 
		E_P 
		\Big[ \, \Big( 
		\, \sum_{i=1}^k \sum_{j=1}^d \, 
		\Big| 
		\sum_{l=1}^d 
		\int_0^{t_i} ( \theta_s - \theta^{\ve}_s)^{j l} \, d W^l_s \, 
		\Big|^2 \, 
		\Big)^{1/2} \Big]\\
		&\le 
		\hat{\E} 
		[\vp (\hat{B}_{t_1} \ddd \hat{B}_{t_k})]
		+ 
		C_{\vp} 
		E_P 
		\Big[ \, 
		\sum_{i=1}^k \sum_{j=1}^d \, 
		\Big| 
		\sum_{l=1}^d 
		\int_0^{t_i} ( \theta_s - \theta^{\ve}_s )^{j l} \, d W^l_s \, 
		\Big|^2 
		\, \Big]^{1/2}\\
		&\le 
		\hat{\E} 
		[\vp (\hat{B})] 
		+
		C_{\vp} 
		\sqrt{k} 
		E_P 
		\Big[
		\int_0^T \| \theta_s - \theta_s^{\ve} \|^2 \, d s 
		\Big] ^{1/2} \\
		&\le 
		\hat{\E} 
		[\vp (\hat{B})] 
		+ 
		C_{\vp} 
		\sqrt{k} \ve ,
	\end{flalign}
	where $C_{\vp}$ is a Lipschitz constant of $\vp$ and 
	$(\theta_s - \theta_s^{\ve})^{jl}$ is the $(j,l)$-entry of $\theta_s - \theta_s^{\ve}$.
	Since $\ve >0$ is arbitrary, we get
	\begin{align}
		E_{P_{\theta}} [\vp (B)]
		\le 
		\hat{\E} [\vp (\hat{B})].
	\end{align}
	Now we show \eqref{s,apploxi,eq}. Let $\theta$ be given in the form
	\begin{flalign}\label{theta}
		\theta_t 
		= 
		\eta_0 \one_{[t_0,t_1]}(t) 
		+ 
		\eta_1(W) \one_{(t_1,t_2]}(t) 
		+ \dots + 
		\eta_{n-1}(W) \one_{(t_{n-1},t_n]}(t)
	\end{flalign}
	for $0 \le t \le T$, where $0=t_0<t_1< \cdots <t_n=T$ is a partition of $[0,T]$, 
	$\eta_0 \in \Theta$,
	and $\eta_i (\omega) \equiv \eta_i (\omega_t ,~ t \le t_i)$, 
	$\omega \in \Omega$,
	is a $\Theta$-valued measurable functional on $\Omega$ for $i = 1, \dots ,n-1$.
	We now define the sequence of random variables $\{ \tilde{\eta}_i \}_{i=1}^{n-1}$
	and the simple process $\tilde{\theta} = \{ \tilde{\theta}_t ; 0 \le t \le T \}$ as follows:
	\begin{flalign}
		\left\{
		\begin{aligned}
			\tilde{\eta}_0 
			&:= 
			\eta_0, \\
			\tilde{\eta}_1 
			&:= 
			\eta_1 (W_t - 
			\int_0^t  \tilde{\theta}_s^* \, h_s^{(\tilde{\theta})} \, d s ,~ 
			t \le t_1), \\
			&\vdots \\
			\tilde{\eta}_{n-1} 
			&:= 
			\eta_{n-1} (W_t - 
			\int_0^t \tilde{\theta}_s^* \, h_s^{(\tilde{\theta})} \, d s ,~ 
			t \le t_{n-1} ),
		\end{aligned}
		\qd
		\begin{aligned}
			\tilde{\theta}_t 
			&:= 
			\tilde{\eta}_0 ,~ 
			t_0 \le t \le t_1 ,\\
			\tilde{\theta}_t 
			&:= 
			\tilde{\eta}_1 ,~
			t_1 < t \le t_2 ,\\
			&\vdots \\
			\tilde{\theta}_t 
			&:= 
			\tilde{\eta}_{n-1} ,~
			t_{n-1} < t \le t_n ,
		\end{aligned}
		\right.
	\end{flalign}
	where $h_s^{(\tilde{\theta})} := h_s (\int_0^{\cdot} \tilde{\theta}_u \, dW_u)$.
	As the right-hand side of \eqref{theta} is given as a functional of $W$, 
	we denote it by $\theta_t (W)$ with a slight abuse of notation.
	Then, from the above construction of $\tilde{\theta}$, for all $0 \le t \le T$,
	\begin{flalign}
		\tilde{\theta}_t
		= \theta_t ( W - 
		\int_0^{\cdot} \tilde{\theta}_s^* \, h_s^{(\tilde{\theta})} \, d s ).
	\end{flalign}
	Set
	\begin{flalign}
		&\widetilde{W}_t 
		:= 
		W_t- \int_0^t \tilde{\theta}_s^* \, h_s^{(\tilde{\theta})} \, d s, 
		\qd 0 \le t \le T ,\\
		&D^{( \tilde{\theta} )}_T 
		:= 
		\exp 
		\left(
		\int_0^T 
		\tilde{\theta}_t^* \, h_t^{(\tilde{\theta})} \cdot d W_t
		-\frac{1}{2} \int_0^T 
		h_t^{(\tilde{\theta})} \cdot
		(\tilde{\theta}_t \tilde{\theta}_t^* \, h_t^{(\tilde{\theta})}) \,
		d t
		\right) ,\\
		&\widetilde{P}(A) 
		:= 
		E_P[ \one_A D^{( \tilde{\theta} )}_T] , 
		\qd A \in \calF^W_T .
	\end{flalign}
	Since, by Girsanov's formula, $\widetilde{W}$ 
	is a Brownian motion under $\widetilde{P}$, we have
	\begin{flalign}
		E_{P_{\theta}} 
		[\vp (B)]
		&= 
		E_{\widetilde{P}} 
		[ \vp (\int_0^{\cdot} \theta_s(\widetilde{W}) \, d \widetilde{W}_s) ] \\
		&=
		E_P 
		[ 
		\vp ( \int_0^{\cdot} \tilde{\theta}_s \, d W_s
		- \int_0^{\cdot} 
		\tilde{\theta}_s \tilde{\theta}_s^* \, h_s^{(\tilde{\theta})} \, d s ) 
		\, D^{( \tilde{\theta} )}_T 
		] \\
		&= 
		E_{P_{\tilde{\theta}}} 
		[ 
		\vp ( B - \int_0^{\cdot} (d \qv{B}_s \, h_s) ) 
		\, D_T
		] \\
		&\le 
		\E 
		[ \vp ( B - \int_0^{\cdot} (d \qv{B}_s \, h_s) ) \, D_T ] 
		= 
		\hat{\E} [\vp (\hat{B})] ,
	\end{flalign}
	which shows \eqref{s,apploxi,eq}.

	(ii) Next we show that $\hat{\E} [\vp (\hat{B})] \le \E [\vp (B)]$.

	For each $\theta \in \cA$, let $Q_{\theta}$ be the measure defined by \eqref{Q}.
	By \lref{hatB,mar}, $\hat{B}$ is a $Q_{\theta}$-martingale.
	Girsanov's formula also implies that
	\begin{flalign}
		\qv{ \hat{B} } 
		= \qv{B} 
		,\qd P_{\theta} \n{-a.s.\ and }Q_{\theta} \n{-a.s.}
	\end{flalign}
	Hence $Q_{\theta} \circ \hat{B}^{-1} \in \scrP$,
	where $Q_{\theta} \circ \hat{B}^{-1} (A) := Q_{\theta} (\hat{B} \in A)$ 
	for each $A \in \calB (\Omega)$.
	Then, using \lref{sup,mar,lem}, we have
	\begin{flalign}
		E_{P_{\theta}} 
		[\vp (\hat{B}) D_T]
		= 
		E_{Q_{\theta} \circ \hat{B}^{-1}} 
		[\vp (B)]
		\le 
		\sup_{P \in \scrP} 
		E_P [\vp (B)]
		= 
		\E [\vp (B)] .
	\end{flalign}
	Therefore we get
	\begin{align}
		\hat{\E} [\vp (\hat{B}) ] 
		= 
		\sup_{\theta \in \cA} 
		E_{P_{\theta}} 
		[\vp (\hat{B}) D_T] 
		\le 
		\E [\vp (B)] ,
	\end{align}
	and complete the proof.
\end{proof}

\begin{rem}\label{d:rem}
	In \cite{Xu;10a}, a L\'evy-type characterization 
	of one-dimensional $G$-Brownian motion is given, 
	and by using that characterization, 
	Xu-Shang-Zhang \cite{Xu;10} obtains Girsanov's formula 
	for one-dimensional $G$-Brownian motion. 
	On the other hand, as far as we know, 
	such a characterization is not available in the case of multidimension. 
	We remark that unlike the classical Brownian motion, 
	components of multidimensional $G$-Brownian motion 
	are correlated due to variance uncertainty. 
	The advantage of our method 
	is to appeal directly to the definition of $G$-Brownian motion (\dref{GBm,def}),
	which enables us to deal with the multidimensional case. 
\end{rem}

We conclude this subsection with a remark on the construction of $\hat{\E}$.

\begin{rem}
	The equation \eqref{hatE} holds on $\hat{\calH}$, namely
	\begin{align}
		X D_T \in \calL^1_G (\Omega) 
		\qn{for all } X \in \hat{\calH} .
	\end{align}
	To see this, it is sufficient to check the completeness of 
	$\scrL := \{ X \in \calL^1_G (\Omega) : X D_T \in \calL^1_G (\Omega) \}$
	with respect to the norm $\hat{\E} [| \cdot |]$.
	Let $\{ X_n \}_{n=1}^{\infty} \subset \mathscr{L}$ 
	be an $\hat{\E} [| \cdot |]$-Cauchy sequence, that is,
	\begin{align}
		\E [| X_n - X_m | D_T] \to 0 
		\qd (n,m \to \infty).
	\end{align}
	This implies $\{ X_n D_T \}_{n=1}^{\infty} \subset \calL^1_G (\Omega)$ 
	is an $\E[| \cdot |]$-Cauchy sequence.
	Hence, from completeness of $\calL^1_G (\Omega)$, 
	there exists a unique $Y \in \calL^1_G (\Omega)$ such that
	\begin{align}
		\E [|X_n D_T - Y|] \to 0 
		\qd (n \to \infty ) .
	\end{align}
	As $X := Y D_T^{-1}$ is in $\mathscr{L}$, 
	we get $\hat{\E} [|X_n -X|] \to 0 \qd (n \to \infty )$.
	Therefore $\mathscr{L}$ is complete under the norm $\hat{\E} [| \cdot |]$.
\end{rem}

\subsection{$\bm{G}$-Novikov's condition}

For $h \in (M^2_G ( \Omega ))^d$, consider the process $D$ defined by \eqref{D}.
In this subsection, we give a sufficient condition for $D$ 
to be a symmetric $G$-martingale, which reads as follows:
there exists $\ve >0$ such that
\begin{align}\label{GN}
	\E 
	\left[ 
	\exp 
	\left( 
	\frac{1}{2} (1+ \ve ) \int_0^T h_s \cdot (d \qv{B}_s \, h_s) 
	\right) 
	\right] 
	< \infty.
\end{align}
This condition may be regarded as a sublinear counterpart 
to the well-known Novikov's condition in the classical stochastic analysis,
and we refer to it as $G$-Novikov's condition.
We remark that in the one-dimensional case, 
this condition is the same as that imposed in \cite{Xu;10}.

\begin{prop}\label{GNov,prop}
	If $h \in (M^2_G (\Omega))^d$ satisfies $G$-Novikov's condition \eqref{GN}, 
	then the process $D$ is a symmetric $G$-martingale.
\end{prop}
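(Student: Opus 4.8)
The plan is to invoke the characterization of symmetric $G$-martingales in \pref{symG}: it suffices to verify that $D_t \in \calL^1_G(\Omega_t)$ for every $t \in [0,T]$ and that $D$ is a $P_{\theta}$-martingale for each $\theta \in \cA$. Writing $M_t \eqdef \int_0^t h_s \cdot dB_s$, so that its quadratic variation is $\qv{M}_t = \int_0^t h_s \cdot (d\qv{B}_s h_s)$ and $D_t = \calE(M)_t$ is the stochastic exponential of $M$, the $G$-Novikov condition \eqref{GN} reads, after applying \tref{chara}, as
\begin{align*}
\sup_{\theta \in \cA} E_{P_{\theta}}\Big[ \exp\Big( \tfrac{1}{2}(1+\ve) \qv{M}_T \Big) \Big] = \ol{\E}\Big[ \exp\Big( \tfrac{1}{2}(1+\ve)\qv{M}_T\Big)\Big] < \infty .
\end{align*}

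For the martingale property I would fix $\theta \in \cA$ and work under $P_{\theta}$, under which $B$ is a continuous martingale and the $G$-It\^o integral $M$ coincides with the classical It\^o integral, so that $D = \calE(M)$ in the usual sense. Since $\exp(\tfrac12 \qv{M}_T) \lqs \exp(\tfrac12 (1+\ve)\qv{M}_T)$ pointwise and the latter has finite $P_{\theta}$-expectation by the display above, the classical Novikov criterion applies and shows that $D$ is a genuine (in fact uniformly integrable) $P_{\theta}$-martingale.

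It then remains to prove $D_t \in \calL^1_G(\Omega_t)$, for which I would use the characterization in \tref{chara}. Quasi-continuity of $D_t$ follows because the It\^o integral $\int_0^t h_s\cdot dB_s$ and the integral $\int_0^t h_s \cdot (d\qv{B}_s h_s)$ both have quasi-continuous versions and $D_t$ is their image under a continuous map; the remaining requirement $\lim_{n\to\infty}\ol{\E}[D_t \one_{\{D_t>n\}}] = 0$ will follow from a bound of the form $\sup_{\theta \in \cA} E_{P_{\theta}}[D_t^{q}] < \infty$ for some $q>1$, since then $E_{P_{\theta}}[D_t\one_{\{D_t>n\}}] \lqs n^{-(q-1)}\sup_{\theta}E_{P_{\theta}}[D_t^q]$ uniformly in $\theta$, which gives the uniform tail estimate through $\ol\E$.

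The main obstacle is precisely this uniform $L^q$-bound, and it is here that the factor $1+\ve$ is used. I would write $D_t^q = \calE(\tfrac qu M)_t^{\,u}\,\exp(\mu\qv{M}_t)^{1-u}$ for a parameter $u \in (0,1)$, where $\mu = \tfrac{q(q-u)}{2u(1-u)}$ is chosen so that the two exponents in $\qv{M}_t$ and $M_t$ match; H\"older's inequality with conjugate exponents $1/u$ and $1/(1-u)$, together with the supermartingale bound $E_{P_{\theta}}[\calE(\tfrac qu M)_t]\lqs 1$ and $\qv{M}_t \lqs \qv{M}_T$, then yields
\begin{align*}
E_{P_{\theta}}[D_t^q] \lqs E_{P_{\theta}}\big[ \exp(\mu \qv{M}_T)\big]^{1-u}.
\end{align*}
Taking $q = 1+\delta$ with $\delta>0$ small and $u$ near $1-\sqrt{\delta}$ makes $\mu$ tend to $\tfrac12$, so that $\mu \lqs \tfrac12(1+\ve)$ for $\delta$ small enough (depending on $\ve$); the right-hand side is then bounded uniformly in $\theta$ by the $G$-Novikov condition. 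This produces the desired $L^q$-bound with $q>1$, completing the verification of both hypotheses of \pref{symG} and hence the proof that $D$ is a symmetric $G$-martingale.
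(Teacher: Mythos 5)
Your proposal is correct and follows essentially the same route as the paper: both reduce the claim to \pref{symG}, get the $P_{\theta}$-martingale property of $D$ from the classical Novikov criterion under each $P_{\theta}$, and establish $D_t \in \calL^1_G(\Omega_t)$ via \tref{chara} together with a uniform-in-$\theta$ $L^q$-bound ($q>1$) obtained by H\"older-splitting $D_t^q$ into a stochastic-exponential factor and an exponential of $\int_0^t h_s \cdot (d\qv{B}_s h_s)$ that is controlled by $G$-Novikov's condition. The only cosmetic differences are that the paper fixes the explicit exponents $p = \frac{1+\ve}{2\sqrt{1+\ve}-1}$ and $q = \frac{2\sqrt{1+\ve}-1}{\sqrt{1+\ve}}$ (so that both relevant exponents equal exactly $\frac{1+\ve}{2}$) and invokes Novikov again to make the auxiliary exponential a true martingale, whereas you use a parametrized splitting with a limiting choice of $(q,u)$ and the supermartingale bound $E_{P_{\theta}}[\calE(\tfrac{q}{u}M)_t] \lqs 1$, which is in fact a slight simplification since only the inequality is needed.
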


\begin{proof}
	Note that under the condition \eqref{GN}, 
	the usual Novikov's condition is fulfilled for all $\theta \in \cA$:
	\begin{align}
		E_{P_{\theta}} 
		\left[ 
		\exp 
		\left( 
		\frac{1}{2} \int_0^T h_s \cdot (d \qv{B}_s \, h_s) 
		\right) 
		\right] 
		< \infty .
	\end{align}
	Therefore $D$ is a $P_{\theta}$-martingale for each $\theta \in \cA$.
	In view of \pref{symG}, it remains to prove that 
	$D_t \in \calL^1_G (\Omega_t)$ for each $t \in [0,T]$.

	Fix $t \in [0,T]$ and let
	\begin{align}
		p = \frac{1+ \ve}{2 \sqrt{1 + \ve} -1} ,\qd 
		q = \frac{2 \sqrt{1 + \ve} -1}{\sqrt{1+ \ve}}.
	\end{align}
	Note that $p,q >1$ and
	\begin{align}\label{pq}
		p^2 q^2 = \frac{p q(p q-1)}{q-1} = 1+ \ve .
	\end{align}
	Then, for all $\theta \in \cA$,
	\begin{align}
		&E_{P_{\theta}} [(D_t)^p]\\
		&=  
		E_{P_{\theta}} 
		\left[ 
		\exp 
		\left( 
		\int_0^t p h_s \cdot dB_s 
		- 
		\frac{1}{2} \int_0^t 
		p^2 q h_s \cdot (d \qv{B}_s \, h_s) 
		\right) 
		\right. \\
		&\hspace{140pt}
		\times 
		\left.
		\exp 
		\left( 
		\frac{p(pq-1)}{2} \int_0^t h_s \cdot (d \qv{B}_s \, h_s)
		\right) 
		\right] \\
		&\le 
		E_{P_{\theta}} 
		\left[ 
		\exp 
		\left( 
		\int_0^t p q h_s \cdot dB_s
		- \frac{1}{2} \int_0^t 
		p^2 q^2 h_s \cdot (d \qv{B}_s \, h_s) 
		\right) 
		\right] ^{1/q}\\
		&\hspace{140pt}
		\times 
		E_{P_{\theta}} 
		\left[ 
		\exp 
		\left( 
		\frac{p q(p q-1)}{2(q-1)} \int_0^t h_s \cdot (d \qv{B}_s \, h_s)
		\right) 
		\right] ^{1-1/q} .
	\end{align}
	By \eqref{GN} and  \eqref{pq}, we have
	\begin{align}
		\ol{\E} 
		\left[ 
		\exp 
		\left( 
		\frac{p q(p q-1)}{2(q-1)} \int_0^t h_s \cdot (d \qv{B}_s \, h_s)
		\right) 
		\right] 
		< \infty,
	\end{align}
		and Novikov's condition implies that the process
	\begin{align}
		\left\{ 
		\exp 
		\left( 
		\int_0^t pq h_s \cdot dB_s 
		- 
		\frac{1}{2} \int_0^t 
		p^2 q^2 h_s \cdot (d \qv{B}_s \, h_s) 
		\right) 
		; 0 \le t \le T 
		\right\}
	\end{align}
	is a $P_{\theta}$-martingale.
	Therefore $\ol{\E} [(D_t)^p] < \infty$, and hence
	\begin{align}
		\lim_{N \to \infty} 
		\ol{\E} [D_t \one_{\{ D_t > N \}}] 
		=0.
	\end{align}
	Moreover $D_t$ has a q.c.\ version and belongs to 
	$L^0 (\Omega_t)$ since $\int_0^t h_s \cdot dB_s$
	and $\int_0^t h_s \cdot (d \qv{B}_s \, h_s)$ do by their definitions.
	Therefore, by \tref{chara}, 
	we have $D_t \in \calL^1_G (\Omega_t)$.
\end{proof}

\subsection*{Acknowledgments}

The author would like to thank Professor Yuu Hariya 
for his constant encouragement during this work.
She would also like to thank the associate editor 
for valuable comments and suggestions on an earlier version of the paper.


\end{document}